\renewcommand{\tilde}{\widetilde}
\newcommand{\mathsym}[1]{{}}
\newcommand{\co}{\colon\thinspace}
\newcommand{\inv}{^{-1}}
\renewcommand{\bar}{\overline}
\newcommand{\Stab}{\operatorname{Stab }}
\newcommand{\Hull}{\operatorname{Hull}}
\newcommand{\cal}[1]{\mathcal{#1}}
\def\mc {\mathcal}
\newcommand{\CAT} {\ensuremath{\operatorname{CAT}}}
\newcommand{\orbit}[1] {\llbracket {#1} \rrbracket}
\newcommand{\rightQ}[2]{\left.\raisebox{.2em}{$#1$}\middle/\raisebox{-.2em}{$#2$}\right.}
\newcommand{\leftQ}[2]{\left.\raisebox{-.2em}{$#2$}\middle\backslash\raisebox{.2em}{$#1$}\right.}
\newtheorem{theorem}{Theorem}[section]
\newtheorem{definition}[theorem]{Definition}
\newtheorem{proposition}[theorem]{Proposition}
\newtheorem{cor}[theorem]{Corollary}
\newtheorem{lemma}[theorem]{Lemma}
\newtheorem{remark}[theorem]{Remark}
\newtheorem{notation}[theorem]{Notation}
\newtheorem{assumption}[theorem]{Assumption}
\newtheorem{convention}[theorem]{Convention}
\title{Relatively geometric actions on \CAT$(0)$ cube complexes}
\author{Eduard Einstein}
\author{Daniel Groves}
\address{Department of Mathematics, Statistics, and Computer Science,
University of Illinois at Chicago,
322 Science and Engineering Offices (M/C 249),
851 S. Morgan St.,
Chicago, IL 60607-7045}
\email{ede17@pitt.edu}
\email{groves@math.uic.edu}
\thanks{The first author is supported in part by an AMS-Simons Travel Grant.  The second author is supported in part by NSF grant DMS-1904913. AMS Subject classification (2020): 20F67, 20F65, 57M07.}
\begin{document}
\begin{abstract}
We develop the foundations of the theory of relatively geometric actions of relatively hyperbolic groups on \CAT$(0)$ cube complexes, a notion introduced in our previous work \cite{RelCannon}.  In the relatively geometric setting we prove: full relatively quasi-convex subgroups are convex compact; an analog of Agol's Theorem; and a version of Haglund--Wise's Canonical Completion and Retraction.  
\end{abstract}

\maketitle

\section{Introduction}
The interaction between the geometry of \CAT$(0)$ cube complexes and that of hyperbolic groups is at the center of some of the most powerful aspects of Haglund and Wise's theory of (virtually) special cube complexes \cite{HW08}, which in turn was at the center of the resolution of the Virtual Haken Conjecture \cite{AgolVirtualHaken}, and the Virtual Fibering Conjecture \cite{AgolVirtualHaken} in the closed case.  In the case of finite-volume hyperbolic $3$--manifolds, the Virtual Fibering Conjecture was resolved by Wise \cite{WiseManuscript}, also using virtually special cube complexes, but now using the relatively hyperbolic geometry of the fundamental group.

In search of more general results, there have been numerous papers dealing with relatively hyperbolic groups acting on \CAT$(0)$ cube complexes.  See, for example, \cite{HruskaWise14}, \cite{SW2015}, or \cite{PW}.  These papers typically deal with proper actions, which are either co-compact, or co-sparse.

In \cite{RelCannon}, we introduced a new kind of action for a relatively hyperbolic group on a \CAT$(0)$ cube complex, a \emph{relatively geometric action} (see Definition~\ref{def:RG} below).  If a relatively hyperbolic pair $(G,\mc{P})$ acts relatively geometrically on a space $X$ then $X$ is quasi-isometric to the coned-off Cayley graph for $(G,\mc{P})$, and hence in particular is $\delta$--hyperbolic for some $\delta$.  We believe that relatively geometric actions on \CAT$(0)$ cube complexes should have as rich a theory as hyperbolic groups acting properly and cocompactly on \CAT$(0)$ cube complexes.  In this paper, we begin a systematic investigation of such actions.  In a relatively geometric action, the parabolic subgroups $\mc{P}$ act elliptically on the cube complex, so in contrast to either of the kinds of proper actions mentioned above, relatively geometric actions may exist even when the parabolic subgroups do not act on \CAT$(0)$ cube complexes in interesting ways.  Therefore, we expect that the class of relatively hyperbolic groups acting relatively geometrically on \CAT$(0)$ cube complexes is substantially broader than the class acting geometrically, thus bringing these powerful techniques to bear in a much wider setting.  A final reason that relatively geometric actions are desirable is that the techniques of Dehn filling developed by the second author and Manning in \cite{GMOmnibus} are applicable to relatively geometric actions, as we used already in \cite{RelCannon}.  This is a theme we further explore in this paper.

We now state our main results.  First, recall the definition of relatively geometric (see Section~\ref{s:prelim} for other definitions).

\begin{definition} \label{def:RG}
 Suppose that $(G,\cal{P})$ is a group pair.  A (cellular) action of $G$ on a cell complex $\widetilde{X}$ is {\em relatively geometric (with respect to $\cal{P}$)} if 
\begin{enumerate}
\item  $\leftQ{\widetilde{X}}{G}$ is compact;
\item Each element of $\cal{P}$ acts elliptically on $\widetilde{X}$; and
\item Each stabilizer in $G$ of a cell in $\widetilde{X}$ is either finite or else conjugate to a finite-index subgroup of an element of $\cal{P}$.
\end{enumerate}
\end{definition}

In order to state our results,
we make the following standing assumption.
\begin{assumption} \label{as:RG}
Suppose that $(G,\mc{P})$ is relatively hyperbolic and that $\widetilde{X}$ is a \CAT$(0)$ cube complex which admits a relatively geometric action of $G$ with respect to $\mc{P}$.  Let $X = \leftQ{\widetilde{X}}{G}$.
\end{assumption}

In Sageev--Wise \cite{SW2015} and Haglund \cite{Haglund2008} it is proved that if a hyperbolic group $G$ acts geometrically on a \CAT$(0)$ cube complex and $H$ is a quasi-convex subgroup of $G$ then there is a convex $H$--invariant and $H$--cocompact subcomplex.  Thus, $H$ also acts properly and cocompactly on a \CAT$(0)$ cube complex, and in the virtually special setting this is what allows the canonical completion and retraction construction of Haglund--Wise \cite{HW08} to be applied.

Our first result, proved in Section~\ref{s:Basic props}, is a relatively geometric analogue of the above-mentioned results of \cite{SW2015} and \cite{Haglund2008}.  For a relatively hyperbolic analogue in the proper and cocompact or cosparse settings, see \cite{SW2015}.

\begin{restatable}{theorem}{CCC} \label{t:CCC}
Under Assumption~\ref{as:RG}, for any full relatively quasi-convex subgroup $H$ of $G$ and any compact $K \subset \widetilde{X}$ there exists a convex $H$--invariant sub-complex $\widetilde{Y}$ of $\widetilde{X}$ so that $K \subset \widetilde{Y}$ and  $\leftQ{\widetilde{Y}}{H}$ is compact.
\end{restatable}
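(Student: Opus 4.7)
The strategy is to define $\widetilde{Y}$ as the combinatorial convex hull of the $H$-orbit of a suitable enlarged compact set $K' \supseteq K$, and to exploit the hyperbolicity of $\widetilde{X}$. Since the $G$-action is relatively geometric, $\widetilde{X}$ is quasi-isometric to the coned-off Cayley graph $\widehat{\operatorname{Cay}}(G,\mc{P})$ and hence $\delta$-hyperbolic for some $\delta$; the quasi-isometry can be arranged to send the cone vertex of a peripheral coset $gP$ to a cell fixed by $gPg^{-1}$, which exists because $\mc{P}$ acts elliptically. Since $H$ is full relatively quasi-convex, $(H,\mc{D})$ is itself relatively hyperbolic, where $\mc{D}$ consists up to $H$-conjugacy of finitely many infinite intersections $D_i = H \cap g_iP_ig_i^{-1}$, each of finite index in $g_iP_ig_i^{-1}$. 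For each $i$ choose a cell $c_i \in \widetilde{X}$ fixed by $g_iP_ig_i^{-1}$, let $K'$ be a compact connected subcomplex containing $K$ together with each $c_i$, and set $\widetilde{Y} = \Hull(H \cdot K')$, the intersection of all combinatorial half-spaces of $\widetilde{X}$ containing $H \cdot K'$. By construction $\widetilde{Y}$ is convex, contains $K$, and is $H$-invariant since the defining family of half-spaces is $H$-invariant.

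The main work is to show that $\leftQ{\widetilde{Y}}{H}$ is compact. The plan is to prove that $\widetilde{Y}$ lies in a bounded neighborhood of $H \cdot K'$, which, combined with $G$-cocompactness and the relatively geometric description of cell stabilizers, yields finiteness of $H$-orbits of cells. The neighborhood bound rests on two ingredients: (a) $H \cdot K'$ is $\lambda$-quasi-convex in $\widetilde{X}$ for some $\lambda$; and (b) in a $\delta$-hyperbolic \CAT$(0)$ cube complex, the combinatorial convex hull of a $\lambda$-quasi-convex set lies in its $R(\delta,\lambda)$-neighborhood (a standard consequence of hyperbolicity; cf.\ work of Haglund and of Sageev--Wise). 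For (a), I would transfer the quasi-convexity of an $H$-orbit in $\widehat{\operatorname{Cay}}(G,\mc{P})$, which follows directly from relative quasi-convexity of $H$, across the quasi-isometry to $\widetilde{X}$; adjoining the finitely many $c_i$ to the orbit contributes only bounded distortion because each $c_i$ corresponds under the quasi-isometry to a point near a peripheral cone vertex.

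The principal obstacle is step (a): a combinatorial geodesic in $\widetilde{X}$ between two points of $H \cdot K'$ may detour through the fixed set of a parabolic $gPg^{-1}$ that $H$ does not otherwise see. Fullness of $H$ is the critical hypothesis here. If $H \cap gPg^{-1}$ is infinite, fullness forces it to be of finite index in $gPg^{-1}$, hence $H$-conjugate to some $D_i$; by our choice of $K'$, the corresponding $H$-translate of $K'$ already contains a fixed cell of $gPg^{-1}$, so the geodesic's detour is witnessed by $H \cdot K'$. If instead $H \cap gPg^{-1}$ is finite, then any detour of the geodesic into $\operatorname{Fix}(gPg^{-1})$ is uniformly bounded using $G$-cocompactness and the finiteness of cell stabilizer types in $\widetilde{X}$. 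In either case the geodesic stays within a uniform neighborhood of $H \cdot K'$, giving the desired quasi-convexity. Without fullness, geodesics could escape arbitrarily far into parabolic fixed regions unwitnessed by any $H$-translate of $K'$, and the construction would fail to be $H$-cocompact; this is precisely where the full hypothesis (and not merely relative quasi-convexity) is needed.
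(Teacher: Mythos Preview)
Your overall architecture matches the paper's: take the combinatorial convex hull of an $H$--cocompact set and use hyperbolicity together with \cite[Proposition~3.3]{SW2015} to trap the hull in a bounded neighbourhood of that set. However, you have misidentified where the real difficulty lies, and as a result your argument has a genuine gap.

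Quasi-convexity of $H\cdot K'$ in $\widetilde{X}$ is \emph{not} the hard step, and it does not require fullness. By Hruska's characterization (QC-5), any relatively quasi-convex subgroup has quasi-convex orbits in the coned-off Cayley graph, and this transfers directly across the quasi-isometry to $\widetilde{X}$; this is Corollary~\ref{cor:RQC QC} in the paper. Your entire discussion of ``detours through parabolic fixed sets'' is unnecessary: in $\widetilde{X}$ the sets $C(P)$ are compact, so there is no escape phenomenon to control at this stage.

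The step you treat as a triviality --- that a bounded neighbourhood of $H\cdot K'$ is $H$--cocompact --- is precisely the heart of the matter, because $\widetilde{X}$ is \emph{locally infinite}. A ball of radius $R$ around a vertex with infinite $G$--stabilizer contains infinitely many vertices, so ``bounded neighbourhood'' alone buys you nothing. The paper spends almost all of its proof on this: one must show that for each vertex $z$ of the hull, there are only finitely many $\Stab_H(z)$--orbits of edges of the hull incident to $z$. Fullness enters here: if $\Stab_H(z)$ is infinite then fullness makes it finite index in $\Stab_G(z)$, hence cocompact on the link. The genuinely hard case is when $\Stab_G(z)$ is infinite but $\Stab_H(z)$ is finite; the paper handles it via a pentagon argument in the cusped space, using Proposition~\ref{prop:MMP} to bound horoball penetration for $H$ and for the hyperplane stabilizers. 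Your proposal contains no mechanism to address this case.
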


In order to solve the Virtual Haken and Virtual Fibering Conjectures, Agol proved that any hyperbolic group acting geometrically on a \CAT$(0)$ cube complex is virtually special.  The next result is a relatively geometric version of Agol's Theorem, and is proved in Section~\ref{s:Dehn}.

\begin{restatable}{theorem}{RGAGol} \label{t:RGA}
Under Assumption~\ref{as:RG}, if the elements of $\mc{P}$ are residually finite then there is a finite-index subgroup $G_0$ of $G$ so that $\leftQ{\widetilde{X}}{G_0}$ is a special cube complex.
\end{restatable}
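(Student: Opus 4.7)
The plan is to reduce, via Dehn filling, to Agol's theorem for hyperbolic groups acting geometrically on \CAT$(0)$ cube complexes. First, I would verify that every hyperplane stabilizer in $\widetilde{X}$ is a full relatively quasi-convex subgroup of $(G,\mc{P})$: by Assumption~\ref{as:RG}, the stabilizer $H$ of a hyperplane $\widetilde{J}$ acts cocompactly on the convex subcomplex $\widetilde{J}$, and because every element of $\mc{P}$ acts elliptically on $\widetilde{X}$, no infinite parabolic subgroup can stabilize a hyperplane, so $H$ is full. Theorem~\ref{t:CCC} then supplies a convex $H$-cocompact subcomplex of $\widetilde{X}$ containing $\widetilde{J}$ for each such $H$; this provides the control on hyperplane behaviour needed below.

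Next, I would choose sufficiently deep Dehn fillings of the peripherals. Since each $P\in\mc{P}$ is residually finite and has a finite-index subgroup fixing a vertex of $\widetilde{X}$, there are arbitrarily small finite-index normal subgroups $N_P\lhd P$ contained in the stabilizer of some vertex. Using the cubical Dehn filling apparatus of \cite{RelCannon} (built on \cite{GMOmnibus}), I would pick the $N_P$ deeply enough so that the filling $\bar{G}:=G/\langle\!\langle \bigcup_P N_P^G\rangle\!\rangle$ is hyperbolic and inherits a geometric action on a \CAT$(0)$ cube complex $\bar{X}$ obtained as a suitable quotient of $\widetilde{X}$: the parabolic cell-stabilizers get collapsed and the remaining cell-stabilizers are finite.

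I would then apply Agol's theorem to $\bar{G}$ acting geometrically on $\bar{X}$ to obtain a finite-index subgroup $\bar{G}_0\le\bar{G}$ with $\bar{X}/\bar{G}_0$ a special cube complex. Taking $G_0$ to be the preimage of $\bar{G}_0$ in $G$ yields a finite-index subgroup of $G$ containing the filling kernel, so $\leftQ{\widetilde{X}}{G_0}=\bar{X}/\bar{G}_0$ is special, as required.

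The main obstacle is the cubical Dehn filling step: producing a filling whose kernel-quotient of $\widetilde{X}$ is a \CAT$(0)$ cube complex on which the filled group acts geometrically. The kernel of the filling map does not act freely on $\widetilde{X}$ (it fixes precisely the orbits of vertices through which the $N_P$ act), so identifying the quotient as a cube complex and controlling its hyperplane structure requires the convex cocompact cores from Theorem~\ref{t:CCC} together with the peripheral filling theory of \cite{GMOmnibus}. Once this is in hand, the remaining steps are essentially formal.
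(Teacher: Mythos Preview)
Your core strategy---perform a peripherally finite Dehn filling so that the quotient $\overline{X}=\leftQ{\widetilde{X}}{K}$ is \CAT$(0)$ and $\overline{G}$ is hyperbolic acting properly cocompactly on it, apply Agol, then pull back the finite-index subgroup---is exactly the paper's proof. The identification $\leftQ{\widetilde{X}}{G_0}=\leftQ{\overline{X}}{\overline{G}_0}$ at the end is the only thing to check, and you have it.

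That said, two pieces of your write-up are both unnecessary and incorrect. First, the opening paragraph on hyperplane stabilizers plays no role in the argument, and the assertion that ``no infinite parabolic subgroup can stabilize a hyperplane'' is false: in a relatively geometric action an edge can have stabilizer that is finite-index in a maximal parabolic, and then the dual hyperplane's stabilizer contains that infinite parabolic subgroup. Nothing downstream uses hyperplane stabilizers, so simply delete this paragraph. Second, you claim that producing a \CAT$(0)$ quotient $\overline{X}$ ``requires the convex cocompact cores from Theorem~\ref{t:CCC}''. It does not: Proposition~\ref{prop:fill CAT(0)} (equivalently \cite[Proposition~2.3]{RelCannon}, via \cite[Corollary~6.6]{GMOmnibus}) gives this directly from the relatively geometric hypothesis and a choice of $\mc{Q}$--filling, with no appeal to cores. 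Theorem~\ref{t:CCC} is used elsewhere in the paper (for Theorems~\ref{t:fill hull} and~\ref{t:VVR}), but not here. Once you strip these out, your proof is the paper's proof.
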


Note that $\leftQ{\widetilde{X}}{G_0}$ should be considered as a complex of groups, rather than merely as a space, so the underlying space being a special cube complex is not obviously as useful as in the case of Agol's Theorem.   However, there are still favorable separability properties -- see Theorem~\ref{t:VVR} and Corollary~\ref{c:Sep} below.

In Section \ref{s:Dehn}, we consider a full relatively quasi-convex subgroup $H$ of $G$ and explore the behavior of the hull $\widetilde{Y}$  found in Theorem~\ref{t:CCC} under long Dehn fillings.  In particular, a special case of what we prove is the following result, proved in Section~\ref{s:Dehn}.  See Sections~\ref{s:prelim} and \ref{s:Dehn} for definitions of terms, and Proposition~\ref{prop:fill CAT(0)} for a description of the subgroups $\mc{Q}$.

\begin{restatable}{theorem}{FillHull} \label{t:fill hull}
Make Assumption~\ref{as:RG}, and suppose that elements of $\mc{P}$ are residually finite.  Let $H \le G$ be a full relatively quasi-convex subgroup and let $\widetilde{Y}$ be as in Theorem~\ref{t:CCC}.  For sufficiently long $\left( \mc{Q} \cup H \right)$--fillings $G \to G/K$, if $K_H = K \cap H$ then $\overline{X} = \leftQ{\widetilde{X}}{K}$ and $\overline{Y} = \leftQ{\widetilde{Y}}{K_H}$ are both \CAT$(0)$ cube complexes and the natural map $\overline{Y} \to \overline{X}$ is an embedding with convex image.
\end{restatable}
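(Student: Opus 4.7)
\textbf{Proof plan for Theorem~\ref{t:fill hull}.}

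The overall strategy is to apply Proposition~\ref{prop:fill CAT(0)} twice: once to the relatively geometric action of $(G,\mc{P})$ on $\widetilde{X}$ to produce $\overline{X}$, and once to an induced relatively geometric action of $H$ on $\widetilde{Y}$ to produce $\overline{Y}$. The main work is then to exhibit the natural map $\overline{Y}\to \overline{X}$ as an embedding; convexity of the image will follow essentially for free once the embedding is established. Throughout I will tacitly assume that the $(\mc{Q}\cup H)$--filling is long enough for all the Dehn-filling theorems quoted below (notably the versions in \cite{GMOmnibus}) to apply simultaneously; since there are finitely many invocations, a single sufficiently long class of fillings will suffice.

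First, since a $(\mc{Q}\cup H)$--filling restricts to a $\mc{Q}$--filling of $(G,\mc{Q})$ (the extra peripheral factor $H$ only strengthens the hypothesis), Proposition~\ref{prop:fill CAT(0)} immediately gives that $\overline{X}=\leftQ{\widetilde{X}}{K}$ is a \CAT$(0)$ cube complex on which $G/K$ acts relatively geometrically with respect to the images of $\mc{P}$. Next, I would use fullness of $H$ to verify that $H$ acts relatively geometrically on $\widetilde{Y}$ with respect to the induced peripheral structure $\mc{D}$ whose elements are conjugacy-class representatives of those intersections $H\cap gPg\inv$ that are infinite (equivalently, by fullness, finite-index in some $gPg\inv$): $\leftQ{\widetilde Y}{H}$ is compact by Theorem~\ref{t:CCC}; elements of $\mc{D}$ act elliptically on $\widetilde Y$ because their super-peripheral groups act elliptically on $\widetilde X$ and stabilize $\widetilde Y$; and cell stabilizers in $H$ are the $H$--intersections of cell stabilizers in $G$ and hence are either finite or finite-index in some conjugate of an element of $\mc{D}$.

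The next step is to argue that including $H$ in the peripheral structure of the filling guarantees that $K_H = K\cap H$ is an appropriately long $\mc{D}$--filling kernel of $(H,\mc{D})$. This is the standard ``filling restricts to quasi-convex subgroups'' phenomenon for fully relatively quasi-convex subgroups in the Dehn filling theorem of \cite{GMOmnibus}; fullness ensures $K_H$ is generated by the appropriate parabolic elements of $\mc{D}$ and has sufficient length. With this in hand, a second application of Proposition~\ref{prop:fill CAT(0)} to the $H$--action on $\widetilde Y$ shows that $\overline Y = \leftQ{\widetilde Y}{K_H}$ is a \CAT$(0)$ cube complex. The central remaining task, which I expect to be the main obstacle, is to prove the following separation statement: for sufficiently long fillings, if $k\in K$ and $k\widetilde Y\cap \widetilde Y\neq\emptyset$, then $k\in H$ (and hence $k\in K_H$). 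This is where adding $H$ to the peripheral structure pays off, because $H$ becomes a cell stabilizer type object after filling and distinct $G/K$--translates of (the image of) $\widetilde Y$ must correspond to distinct cosets of $H/K_H$ in $G/K$; concretely, I would argue by contradiction using that an element $k\in K\setminus H$ with $k\widetilde Y\cap \widetilde Y\neq\emptyset$ would, for long enough fillings, force an impossibly short relator involving the generators of $K$ and a bounded word in $H$, contradicting the filling length.

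Granted this separation statement, the map $\overline Y\to\overline X$ is injective: if $y_1,y_2\in \widetilde Y$ satisfy $ky_1=y_2$ for some $k\in K$, then $k\widetilde Y\cap\widetilde Y\ni y_2$, so $k\in K_H$. For convexity of the image, I would use that $\widetilde X\to\overline X$ is a covering map (because $K$ acts freely on $\widetilde X$ once the filling is long enough), so links in $\overline X$ at any vertex are canonically identified with links in $\widetilde X$ at lifts; similarly for $\overline Y$. Convexity of $\widetilde Y\subseteq \widetilde X$ says that each link of $\widetilde Y$ is a full subcomplex of the corresponding link of $\widetilde X$, and this passes to the quotient to show $\overline Y$ is locally convex in $\overline X$. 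Since $\overline X$ is \CAT$(0)$ and $\overline Y$ is connected, local convexity upgrades to global convexity, completing the proof.
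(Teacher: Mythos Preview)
Your plan has two genuine gaps, both stemming from the fact that the $K$--action on $\widetilde{X}$ is \emph{not} free.

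\textbf{The separation statement is false.} You propose to show that if $k\in K$ and $k\widetilde{Y}\cap\widetilde{Y}\neq\emptyset$ then $k\in H$. But take a cell $\tau\subset\widetilde{Y}$ whose $G$--stabilizer is infinite while its $H$--stabilizer is finite (such cells exist in general: $\widetilde{Y}$ is only required to be convex and $H$--cocompact, and $H$ need not meet every maximal parabolic whose fixed set touches $\widetilde{Y}$). Since the action is relatively geometric, $\Stab_G(\tau)$ contains a conjugate of some $N_P$, hence a nontrivial $k\in K\cap\Stab_G(\tau)$. Then $k\cdot\tau=\tau\subset\widetilde{Y}$, so $k\widetilde{Y}\cap\widetilde{Y}\neq\emptyset$, yet $k\notin H$ because $\Stab_H(\tau)$ is finite and the $H$--filling condition gives no constraint at this cell. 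So your intended contradiction (``impossibly short relator'') cannot go through, because there is no contradiction: such $k$ genuinely exist for every filling. What is actually true, and what the paper proves, is the weaker statement that the induced map is an \emph{immersion}; this is obtained not by a coarse separation argument but by excluding, from the filling kernels, a finite set of elements that witness specific local identifications of adjacent cells.

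\textbf{The convexity argument fails for the same reason.} You assert that $\widetilde{X}\to\overline{X}$ is a covering map ``because $K$ acts freely on $\widetilde{X}$ once the filling is long enough''. It does not: every $N_P\le K$ fixes the cells stabilized by $P$, regardless of filling length. Consequently links in $\overline{X}$ are quotients of links in $\widetilde{X}$, not canonically identified with them, and local convexity of $\widetilde{Y}$ in $\widetilde{X}$ does not automatically descend. The paper handles this by a second finite-exclusion argument: one enumerates the finitely many $H$--orbit pairs of edges in $\widetilde{Y}$ that could, after quotienting, span a square in $\overline{X}$ not coming from $\widetilde{Y}$, and excludes from $K$ the finitely many group elements that would create such a square. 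Once the map is known to be a locally convex immersion into the \CAT$(0)$ cube complex $\overline{X}$, it is automatically an embedding with convex image.

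A minor terminological point: a ``$(\mc{Q}\cup\{H\})$--filling'' does not mean that $H$ is added to the peripheral structure; it means the filling is simultaneously a $\mc{Q}$--filling and an $H$--filling (Definition in \S2.2). So $H$ does not become ``a cell stabilizer type object after filling''; the $H$--filling condition only forces $N_P^g\le H$ at those parabolics where $H$ already has infinite intersection.
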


In Section \ref{s:CCR}, we develop a relatively geometric version of the Canonical Completion and Retraction (see Theorem~\ref{t:CCR}).  An application of this construction is the following, proved in Section~\ref{s:CCR}.
\begin{restatable}{theorem}{VVR} \label{t:VVR}
Under Assumption~\ref{as:RG}, if the elements of $\mc{P}$ are residually finite then for any full relatively quasi-convex subgroup $H$ of $G$ there is a finite-index subgroup $H_0 \le H$ which is a retract of a finite-index subgroup of $G$.  
\end{restatable}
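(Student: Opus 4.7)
The plan is to combine the three main ingredients developed earlier in the paper: the convex core from Theorem~\ref{t:CCC}, the relatively geometric analogue of Agol's theorem (Theorem~\ref{t:RGA}), and the relatively geometric Canonical Completion and Retraction (Theorem~\ref{t:CCR}, proved in Section~\ref{s:CCR}). The overall strategy mirrors the classical Haglund--Wise argument that quasi-convex subgroups of virtually compact special hyperbolic groups are virtual retracts, but with all quotients interpreted as complexes of groups with finite peripheral isotropy.

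First, apply Theorem~\ref{t:CCC} to $H$ (with $K$ any compact subset, for instance a single point) to obtain a convex $H$--invariant subcomplex $\widetilde{Y} \subseteq \widetilde{X}$ with $\leftQ{\widetilde{Y}}{H}$ compact. By Theorem~\ref{t:RGA}, there is a finite-index subgroup $G_1 \le G$ such that the quotient $\leftQ{\widetilde{X}}{G_1}$ is a special cube complex. Set $H_1 = H \cap G_1$, which has finite index in $H$. The convexity of $\widetilde{Y}$ in $\widetilde{X}$ together with the compactness of $\leftQ{\widetilde{Y}}{H_1}$ ensures that the induced map
\[
\phi \colon \leftQ{\widetilde{Y}}{H_1} \longrightarrow \leftQ{\widetilde{X}}{G_1}
\]
is a local isometry of non-positively curved cube complexes.

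The central step is to apply Theorem~\ref{t:CCR} to $\phi$. This produces a finite cover $\widehat{X} \to \leftQ{\widetilde{X}}{G_1}$, corresponding to a further finite-index subgroup $G_0 \le G_1$, such that a lift of $\leftQ{\widetilde{Y}}{H_1}$ sits inside $\widehat{X}$ as a retract. Let $H_0 \le H_1 \cap G_0$ be the finite-index subgroup of $H$ corresponding to this lift. The cube-complex retraction of $\widehat{X}$ onto the lifted copy of $\leftQ{\widetilde{Y}}{H_0}$ then translates to a homomorphism $G_0 \to H_0$ restricting to the identity on $H_0$, exhibiting $H_0$ as a retract of the finite-index subgroup $G_0 \le G$.

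The main obstacle lies in passing cleanly, via Theorem~\ref{t:CCR}, from a local isometry of quotient complexes with non-trivial peripheral stabilisers to an honest group-theoretic retraction. In the classical Haglund--Wise setting the quotients have trivial vertex groups, so a retraction of spaces immediately lifts to a retraction of fundamental groups; here one must verify that the finite cover produced by the relatively geometric Canonical Completion and Retraction matches the peripheral complex-of-groups data on both sides (which is where residual finiteness of the elements of $\mc{P}$ enters), so that the induced map on fundamental groups of complexes of groups really is a retraction. A secondary but expected technical point is that one may need to pass to a still smaller finite-index subgroup of $G_1$, or to enlarge $H_1$ within $H$ on peripheral pieces, so that the precise hypotheses of Theorem~\ref{t:CCR} (and the matching between peripheral structures of $H_1$ and $G_1$) are met for the map $\phi$.
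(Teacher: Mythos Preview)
Your outline has the right broad shape, but it misidentifies where the work lies and skips the essential step.  First, Theorem~\ref{t:CCR} in this paper is the \emph{classical} Haglund--Wise completion and retraction for ordinary special cube complexes, quoted from \cite{HW08,HW2012}; it is not a relatively geometric statement, and there is no pre-existing ``relatively geometric CCR'' to invoke.  Building that is precisely the content of Section~\ref{s:CCR}, so citing Theorem~\ref{t:CCR} as if it already handled complexes of groups is circular.  Second, and more substantively, the paper does not apply Theorem~\ref{t:RGA} and then run CCR at the level of~$G$.  Instead it chooses a peripherally finite $(\mc{Q}\cup\{H\})$--filling $\pi\co G\to\overline{G}=G/K$ satisfying Proposition~\ref{prop:lots of filling properties} and Theorem~\ref{completion loc iso}, so that $\overline{G}$ acts \emph{properly} and cocompactly on $\overline{X}$ and $\overline{Y}\hookrightarrow\overline{X}$ is a convex embedding; one passes to a torsion-free $\overline{G}_0$ via Agol, applies the classical Theorem~\ref{t:CCR} to the honest local isometry $\leftQ{\overline{Y}}{\overline{H}_0}\to\leftQ{\overline{X}}{\overline{G}_0}$ of spaces with trivial isotropy, and only then pulls back along $\pi$ to $G_1=\pi^{-1}(\overline{G}_1)$ and $H_0=H\cap G_1$.

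The obstacle you flag --- promoting the space-level retraction $r\co C\to A$ to a morphism of complexes of groups --- is exactly the heart of the proof, and your plan does not supply the mechanism.  The paper's resolution (Proposition~\ref{prop:magic stabilizers} and Lemma~\ref{lem:same or disj}) is that, because the filling is an $H$--filling and $\overline{G}_1$ is torsion-free, every nontrivial cell stabilizer in $G_1$ is a maximal parabolic, and for every cell $\sigma$ either $G_\sigma=H_{f_r(\sigma)}$ or $G_\sigma\cap H_{f_r(\sigma)}=\{1\}$; this dichotomy lets one define $\theta_\sigma$ as the identity or the trivial map and check compatibility along arrows.  Simply taking $G_1$ from Theorem~\ref{t:RGA} does not give you this: that theorem uses \emph{some} filling, not one adapted to $H$, so you lose all control over how $H$--stabilizers sit inside $G_1$--stabilizers, and there is no reason the retraction of underlying spaces should respect the local groups.
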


\begin{cor}\label{c:Sep}
Under Assumption~\ref{as:RG}, if every $P\in\mc{P}$ is residually finite:
\begin{enumerate}
\item $G$ is residually finite, and
\item every full relatively quasiconvex subgroup of $G$ is separable. 
\end{enumerate}
\end{cor}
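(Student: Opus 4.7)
My plan is to prove (1) via Dehn filling combined with Agol's Theorem, and then deduce (2) from (1) and Theorem~\ref{t:VVR} by a standard retract-separability argument.

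For part (1), given $g\in G\setminus\{1\}$, I will produce a finite quotient of $G$ in which $g$ survives. The idea is to choose a peripheral filling $G\to G/K$, with filling kernels $N_P\triangleleft P$ of finite index, so that the filling is long enough for the Dehn filling machinery of \cite{GMOmnibus} and for Proposition~\ref{prop:fill CAT(0)} (underlying Theorem~\ref{t:fill hull}) to apply, while also ensuring $g\notin K$. The latter is where residual finiteness of the $P\in\mc{P}$ enters: if $g$ is non-parabolic any sufficiently long filling suffices; if $g\in P$, pick $N_P$ of finite index avoiding $g$ (by residual finiteness of $P$), intersected with a sufficiently long finite-index normal subgroup. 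Then $G/K$ is word hyperbolic and acts properly and cocompactly on $\overline{X}=\widetilde{X}/K$, which is a \CAT$(0)$ cube complex (applying Theorem~\ref{t:fill hull}, or the underlying Proposition~\ref{prop:fill CAT(0)}, with $H$ trivial). By Agol's Theorem, $G/K$ is virtually special, hence residually finite; the image of $g$ is nontrivial there and so separated from $1$ by a further finite quotient, giving a finite quotient of $G$ separating $g$ from $1$.

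For part (2), let $H$ be full relatively quasi-convex and $g\in G\setminus H$. Theorem~\ref{t:VVR} furnishes a finite-index $H_0\le H$ and a finite-index $G_1\le G$ with $H_0\le G_1$, together with a retraction $r\co G_1\twoheadrightarrow H_0$. Writing $G_1=K\rtimes H_0$ with $K=\ker r$, I will first show $H_0$ is separable in $G_1$: for any $g_1\in G_1\setminus H_0$, the $K$-component of $g_1$ is a non-identity $k\in K$, and part~(1) gives a finite-index normal $N\triangleleft G_1$ with $k\notin N$; then $N\cdot H_0$ is a finite-index subgroup of $G_1$ containing $H_0$ but not $g_1$. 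Since $[G:G_1]<\infty$, separability of $H_0$ in $G_1$ upgrades to separability of $H_0$ in $G$ (if $g\notin G_1$, then $G_1$ itself separates). Finally, writing $H=\bigsqcup_{i=1}^n H_0 h_i$, for each $i$ we have $gh_i^{-1}\notin H_0$, so we can pick a finite-index normal $L_i\triangleleft G$ containing $H_0$ with $gh_i^{-1}\notin L_i$; the intersection $L=\bigcap_i L_i$ is finite-index normal, contains $H_0$, and satisfies $gh_i^{-1}\notin L$ for every $i$. Then $LH=\bigcup_i L h_i$ is a finite-index subgroup of $G$ containing $H$, and $g\notin LH$ since $g\in Lh_i$ would force $gh_i^{-1}\in L$.

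The essential content is supplied by Theorems~\ref{t:RGA}, \ref{t:fill hull}, and \ref{t:VVR}; the corollary is a largely formal consequence, with the only delicate point being the reduction in part~(1) to a long peripheral filling that simultaneously avoids $g$ and satisfies the hypotheses making $\widetilde{X}/K$ a \CAT$(0)$ cube complex on which $G/K$ acts geometrically. This bookkeeping between the residual finiteness of the parabolics and the length hypotheses of Dehn filling is the only step requiring care.
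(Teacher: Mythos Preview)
Your approach matches the paper's: part~(1) via Theorem~\ref{t:RHDF} and Corollary~\ref{cor:fill to get VS hyp} (which packages exactly the filling/Agol argument you sketch), and part~(2) via residual finiteness of $G$ together with Theorem~\ref{t:VVR}. The overall strategy is correct.

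Two small technical slips in your part~(2), both in standard bookkeeping, are worth fixing. First, from $k\notin N$ alone it does \emph{not} follow that $g_1=kh\notin N H_0$: already in $G_1=\mathbb{Z}^2$ with $H_0=\{0\}\times\mathbb{Z}$, $K=\mathbb{Z}\times\{0\}$, $k=(1,0)$, and $N=\{(a,b):a+b\text{ even}\}$, one has $k\notin N$ but $N+H_0=G_1$. The usual repair is to replace $N$ by $M=N\cap r^{-1}(N\cap H_0)$ (still finite-index normal); then if $g_1=mh'\in M H_0$ one computes $k=m\,r(m)^{-1}\in N$, a contradiction. Equivalently, use the homomorphism $x\mapsto\bigl(xN,\,r(x)N\bigr)\in (G_1/N)^2$, which sends $H_0$ into the diagonal but not $g_1$. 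Second, you cannot in general choose the $L_i$ to be \emph{normal} in $G$ while still containing $H_0$ (a normal subgroup containing $H_0$ contains its normal closure). Take $L_i$ finite-index with $H_0\le L_i$ and $gh_i^{-1}\notin L_i$, let $N_i$ be its normal core, set $N=\bigcap_i N_i$; then $NH$ is a finite-index subgroup containing $H$ and $g\notin NH$ since $gh_i^{-1}\notin N H_0\subseteq L_i$ for each $i$. Alternatively, simply observe that each coset $H_0 h_i$ is closed in the profinite topology (translate of a closed set), so $H=\bigcup_i H_0 h_i$ is closed.
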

The first item is an immediate consequence of Theorem~\ref{t:RHDF} and Corollary~\ref{cor:fill to get VS hyp}. The second item follows from the residual finiteness of $G$ and Theorem~\ref{t:VVR}.   The separability properties of Corollary~\ref{c:Sep} also follow from \cite[Theorem 4.7]{GM20}, which works in the more general setting of ``weakly relatively geometric" actions.

We expect the tools in this paper to be of foundational interest for the further study of relatively geometric actions.  In a future work, we plan to prove a relatively geometric version of the cubulation result of Hsu--Wise \cite{HW2015}. We expect such a relatively geometric analogue of the Hsu--Wise results to be the key to proving a relatively geometric version of Wise's Quasi-convex Hierarchy Theorem, one of the main results from \cite{WiseManuscript}.  Such a result would provide a wealth of examples of relatively geometric actions on \CAT$(0)$ cube complexes.

\begin{convention}
Conjugation of $g$ by $h$, written $g^h$, is $hgh^{-1}$.
\end{convention}

\thanks{The authors thank the referee for a very careful and thoughtful report, which improved this paper.}

\section{Preliminaries}\label{s:prelim}

In this section we collect some basic definitions and results about relatively hyperbolic groups and relatively hyperbolic Dehn filling needed for this paper.

\subsection{Relatively hyperbolic groups and relatively quasi-convex subgroups}

\begin{definition}[Combinatorial horoball] \cite[Definition~3.1]{GM08}
Let $\Gamma$ be a $1$--complex.  The \emph{combinatorial horoball based on $\Gamma$}, denoted $\mc{H}(\Gamma)$, is the $1$-complex whose vertices are $\Gamma^{(0)} \times \left( \{ 0 \} \cup \mathbb N \right)$, and whose edge set consists of (i) edges between $(v,0)$ and $(w,0)$ whenever there is an edge between $v$ and $w$ in $\Gamma$; (ii) for all $k > 0$ and all $v,w \in \Gamma^{(0)}$ so that $0 < d_\Gamma(v,w) \le 2^k$, an edge between $(v,k)$ and $(w,k)$; and (iii) edges between $(v,k)$ and $(v,k+1)$ for all $v \in \Gamma^{(0)}$ and all $k \ge 0$.
\end{definition}
In \cite{GM08}, $2$--cells are added to combinatorial horoballs, but we do not need them here.  According to \cite[Theorem~3.8, Remark~3.9]{GM08}, $\mc{H}(\Gamma)$ is $20$--hyperbolic for any connected $1$--complex $\Gamma$.

\begin{definition}[Cusped space] \cite[Definition~3.12]{GM08} \label{def:cusped space}
Let $G$ be a group and $\mc{P}$ a finite collection of subgroups of $G$.  Further, let $S$ be a generating set for $G$ so that $\left\langle S \cap P \right\rangle = P$ for each $P \in \mc{P}$.  Let $\Gamma(G,S)$ be the Cayley graph of $G$ with respect to $S$.  The \emph{cusped space} for $(G,\mc{P})$, denoted $\mc{C}(G,\mc{P},S)$, is obtained by gluing a copy of the combinatorial horoball over the Cayley graph $\Gamma_P(S\cap P)$ of $P$ (with respect to $S \cap P$) to each left translate of the natural copy of $\Gamma_P(S \cap P)$ in $\Gamma(G,S)$.
\end{definition}

\begin{definition} \cite[Theorem~3.25]{GM08} \label{d:RH}
Let $(G,\mc{P})$ be a group pair, with $G$ finitely generated, $\mc{P}$ finite, and each element of $\mc{P}$ finitely generated.  Then $(G,\mc{P})$ is relatively hyperbolic if for some (any) finite generating set $S$ for $G$ so that $\left\langle S \cap P \right\rangle = P$ for each $P \in \mc{P}$, the cusped space $\mc{C}(G,\mc{P},S)$ is $\nu$--hyperbolic, for some $\nu$.
\end{definition}
There are analogous definitions of relatively hyperbolic pairs $(G,\mc{P})$ when $G$ is not finitely generated, or when $\mc{P}$ is not finite (see \cite{Hruska2010,OsinRelHyp}, for example) but we do not need them here.  The hyperbolicity of the cusped space for $(G,\mc{P)}$ does not depend on the choice of $S$, though of course the value of $\nu$ does.

\begin{convention}
Whenever $(G,\mc{P})$ is relatively hyperbolic, we assume elements of $\mc{P}$ are infinite.  If this is not the case, discard the finite elements.  This does not affect the relative hyperbolicity, or whether or not a given action is relatively geometric.
\end{convention}

\begin{definition}
Let $(G,\cal{P})$ be relatively hyperbolic, and let $H \le G$.  The {\em induced peripheral structure on $H$} is a collection $\cal{D}$ of representatives of $H$--conjugacy classes of maximal infinite parabolic subgroups of $H$.  
\end{definition}
Suppose $(G,\cal{P})$ is relatively hyperbolic, that $H \le G$, and let $\cal{D}$ be the induced peripheral structure on $H$.
Given $D \in \cal{D}$ there exists $P_D \in \cal{P}$ and $c_D \in G$ so that $D \le P_D^{c_D}$.  
Associated to the pairs $(G,\mc{P})$ and $(H,\mc{D})$ (along with choices of generating sets) are cusped spaces $X_G$ and $X_H$.  As explained in \cite[$\S3$]{AGM09}, the inclusion $\iota \co H \to G$ extends to an $H$--equivariant Lipschitz proper map $\check{\iota} \co X_H \to X_G$.

\begin{definition} \cite[Definition 2.9]{GM-QCDF} (see also \cite[Definiton 3.11]{AGM09})
Suppose $(G,\mc{P})$ is a relatively hyperbolic group, that $H \le G$ and that $\mc{D}$ is the induced peripheral structure on $H$.  The subgroup $H$ is \emph{relatively quasi-convex} in $(G,\mc{P})$ if $\check{\iota}(X_H)$ is quasi-convex in $X_G$.
\end{definition}
There are many equivalent characterizations of quasi-convexity in relatively hyperbolic groups.  See Hruska \cite{Hruska2010} for five other conditions (which Hruska proves are all equivalent), and see \cite{MMP} and \cite{GM-QCDF} for a proof that the above definition is equivalent to Hruska's.

\begin{definition}
Let $(G,\mc{P})$ be a group pair, $H \le G$ a subgroup, and $\mc{D}$ the induced peripheral structure on $H$.  The subgroup $H$ is \emph{full} if for every $D \in \mc{D}$, $D$ is finite-index in a maximal parabolic.
\end{definition}

Since we assume elements of $\mc{P}$ are finitely generated, and relatively quasi-convex subgroups are themselves relatively hyperbolic we have the following immediate consequence.

\begin{lemma} \label{l:fRQC fg}
Any full relatively quasi-convex subgroup of $(G,\mc{P})$ is finitely generated.
\end{lemma}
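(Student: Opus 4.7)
The plan is to combine the fullness hypothesis with the standing assumption that elements of $\mc{P}$ are finitely generated to conclude each $D \in \mc{D}$ is finitely generated, then use relative quasi-convexity to package $H$ as being generated by a finite subset together with $\bigcup_{D\in\mc{D}} D$.

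First, I would handle the peripheral subgroups individually. For each $D \in \mc{D}$, by the definition of fullness $D$ has finite index in a maximal parabolic subgroup of $(G,\mc{P})$, which is of the form $P_D^{c_D}$ for some $P_D \in \mc{P}$ and $c_D \in G$. Since $P_D$ is finitely generated by hypothesis, so is its conjugate $P_D^{c_D}$, and therefore so is its finite-index subgroup $D$ (a finite-index subgroup of a finitely generated group is finitely generated, via a Schreier transversal argument).

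Next, I would invoke Hruska's equivalences from \cite{Hruska2010} for relative quasi-convexity. Two facts fall out: the collection $\mc{D}$ is finite (there are only finitely many $H$--conjugacy classes of maximal infinite parabolic subgroups of a relatively quasi-convex $H$), and $H$ is \emph{relatively finitely generated} with respect to $\mc{D}$, meaning there exists a finite subset $F \subset G$ such that $H = \langle F \cup \bigcup_{D \in \mc{D}} D \rangle$. The definition of relative quasi-convexity used in the paper is phrased via quasi-convexity of $\check\iota(X_H)$ in $X_G$, so this step essentially amounts to translating between Hruska's equivalent characterizations; citing \cite{Hruska2010} (or the discussion immediately following Definition of relative quasi-convexity in the excerpt) takes care of this.

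Combining the two steps: $\mc{D}$ is finite, each $D \in \mc{D}$ is finitely generated, and a finite generating set for $H$ is obtained by unioning $F$ with finite generating sets for each $D \in \mc{D}$. The only real obstacle is making sure the chosen definition of relative quasi-convexity yields the ``relatively finitely generated'' property; once that is in place, the conclusion is immediate from finite-generation of peripherals.
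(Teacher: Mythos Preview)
Your proposal is correct and follows essentially the same route as the paper. The paper's justification is a single sentence preceding the lemma: since peripherals are finitely generated and relatively quasi-convex subgroups are themselves relatively hyperbolic (with respect to the induced structure $\mc{D}$), the result is immediate; your argument simply unpacks this implication via the intermediate notion of relative finite generation, which is precisely how one extracts finite generation of $H$ from relative hyperbolicity of $(H,\mc{D})$ together with finite generation of each $D$. One tiny slip: your finite set $F$ should lie in $H$, not merely in $G$.
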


\begin{definition}
Let $(G,\mc{P})$ be relatively hyperbolic and $\mc{C}$ be the cusped space as in Definition~\ref{def:cusped space}.

A geodesic $\gamma$ in $\mc{C}$ \emph{penetrates a horoball $B$ to depth $R>0$} if there exists a $\xi\in \gamma\cap B$ with $d(\xi,\mc{C}\setminus B)\ge R$.
\end{definition}

\begin{proposition}\cite[Proposition~A.6]{MMP} \label{prop:MMP}
Let $(G,\mc{P})$ be relatively hyperbolic, $\mc{C}$ a cusped space for $(G,\mc{P})$, and $H \le G$ relatively quasi-convex.  There exists a constant $R$ depending on $G$, $\mc{C}$ and $H$ so that whenever  a horoball $B$ in $\mc{C}$ is $R$--penetrated by $H$ we have $| \Stab_G(B) \cap H | = \infty$.
\end{proposition}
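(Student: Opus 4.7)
The plan is to use the geometric definition of relative quasi-convexity, namely that $\check{\iota}(X_H)$ is $\kappa$--quasi-convex in $\mc{C}$ for some $\kappa$, where $X_H$ denotes the cusped space of $(H,\mc{D})$. The idea is that a deeply penetrating geodesic produces a point of $\check{\iota}(X_H)$ deep inside $B$, and such a point must come from a horoball of $X_H$ whose image identifies with $B$, yielding an infinite peripheral subgroup of $H$ inside $\Stab_G(B)$. To begin, I would take a geodesic $\gamma$ in $\mc{C}$ with endpoints in $\iota(H)$ which penetrates $B$ to depth $R$, and pick $\xi \in \gamma \cap B$ with $d(\xi, \mc{C}\setminus B) \geq R$. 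Quasi-convexity then gives $\eta \in \check{\iota}(X_H)$ with $d(\xi,\eta) \leq \kappa$, placing $\eta$ inside $B$ at depth at least $R-\kappa$.

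Next I would pin down where $\eta$ lives in $X_H$. Any depth-$0$ vertex of $B$ is of the form $gp$ with $p \in P$ and is adjacent, via a generator in $S \setminus P$ (nonempty since $\langle S \cap P \rangle = P \neq G$), to a vertex outside $B$, so it lies within distance $1$ of $\mc{C} \setminus B$. Hence for $R > \kappa+1$, $\eta$ cannot be the image of a vertex in the Cayley-graph layer of $X_H$, and so admits a preimage $\eta'$ lying in a horoball of $X_H$ based on some coset $hD$ with $h \in H$, $D \in \mc{D}$. Fixing $c_D \in G$ and $P_D \in \mc{P}$ with $D \leq c_D P_D c_D^{-1}$, a direct inspection of the construction of $\check{\iota}$ (as in \cite{AGM09}) shows that this horoball of $X_H$ maps into a uniformly bounded neighborhood of the horoball $B'$ of $\mc{C}$ based on $hc_D P_D$; in particular, $\eta$ lies within a uniform distance of $B'$.

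The hardest step, and where I expect the main effort to concentrate, is then to conclude $B' = B$ once $R$ is chosen sufficiently large. The required input is the standard fact that distinct horoballs in $\mc{C}$ have uniformly bounded coarse overlap: a point lying deep inside one horoball and within bounded distance of a second must in fact lie in the same horoball. This follows from the logarithmic depth structure of combinatorial horoballs together with $\delta$--hyperbolicity of $\mc{C}$, but making the constants explicit is the delicate part. Choosing $R$ larger than the sum of $\kappa$, the bounded-neighborhood constant, and the coarse-overlap constant then forces $B' = B$, so $\Stab_G(B) = h c_D P_D c_D^{-1} h^{-1}$, which contains $hDh^{-1}$. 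Since $D$ is infinite by convention and $h \in H$, the subgroup $hDh^{-1} \leq H \cap \Stab_G(B)$ is infinite, as required.
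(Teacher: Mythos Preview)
The paper does not give its own proof of this proposition; it is simply quoted from \cite[Proposition~A.6]{MMP} and used as a black box.  So there is no in-paper argument to compare against.  That said, your outline is correct and is essentially the standard argument (and is, in spirit, the argument in \cite{MMP}): push a deep point of the penetrating geodesic onto $\check{\iota}(X_H)$ via quasi-convexity, observe that the preimage must sit in a horoball of $X_H$, identify the corresponding horoball $B'$ of $\mc{C}$, and conclude $B'=B$ by a depth argument.

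Two remarks on details.  First, the step you flag as ``the hardest'' is in fact rather easy in the cusped-space model: distinct combinatorial horoballs in $\mc{C}$ meet only along the depth-$0$ Cayley graph, so any path from a point at depth $d$ in $B$ to a different horoball $B'$ must pass through depth $0$ and hence has length at least $d$.  Thus if $\eta$ lies at depth $\ge R-\kappa$ in $B$ and within a fixed distance $L$ of $B'$, then $R-\kappa\le L$, forcing $B'=B$ once $R>\kappa+L$.  No appeal to $\delta$--hyperbolicity or the logarithmic horoball geometry is needed here.  Second, the two facts you defer to the construction of $\check{\iota}$ in \cite{AGM09}---that the Cayley-graph part of $X_H$ lands at bounded depth in $\mc{C}$, and that each horoball of $X_H$ over $hD$ maps into a bounded neighborhood of the horoball over $hc_DP_D$---are indeed immediate from that construction, but it is worth saying explicitly that these are the only properties of $\check{\iota}$ you use.
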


\subsection{Relatively hyperbolic Dehn filling}
 Suppose $(G,\cal{P})$ is a group pair and that $\cal{N} = \{ N_P \unlhd P \mid P \in \cal{P} \}$ is a collection of normal subgroups of the peripheral groups.  The {\em Dehn filling of $(G,\cal{P})$ determined by $\cal{N}$} is the pair $(\overline{G},\overline{\cal{P}})$, where $\overline{G} = G/K$, for $K$ the normal closure in $G$ of $\bigcup \cal{N}$, and $\overline{\cal{P}}$ is the image of $\cal{P}$ in $\overline{G}$.  The elements of $\cal{N}$ are called {\em filling kernels}.  We sometimes also write $\overline{G} = G\left( N_P \mid P \in \cal{P} \right)$, when we want to make the dependence on the choice of filling kernels explicit.
 
The filling is {\em peripherally finite} if $N_P$ is finite-index in $P$ for all $P \in \cal{P}$.  If $H < G$ then the filling is an \emph{$H$--filling} if for every $g \in G$, $|H \cap P^g| = \infty$ implies $N_P^g \le H$.  If $\cal{H}$ is a family of subgroups, an \emph{$\cal{H}$--filling} is a filling which is an $H$--filling for all $H \in \cal{H}$.

A property $\mathsf{P}$ holds {\em for all sufficiently long fillings of $(G,\cal{P})$} if there is a finite set $S \subset \bigcup \cal{P}$ so that $\mathsf{P}$ holds for any filling where $\left( \bigcup \cal{N} \right) \cap S = \emptyset$.  There is an obvious meaning to phrases such as `for all sufficiently long $H$--fillings', etc.

The following result is the basic result of relatively hyperbolic Dehn filling, and is due to Osin \cite{osin:peripheral} (see also \cite{GM08}).

\begin{theorem} \cite[Theorem 1.1]{osin:peripheral} \label{t:RHDF}
Suppose $(G,\mc{P})$ is relatively hyperbolic, and $\mc{F} \subset G$ is finite.  For sufficiently long Dehn fillings $G \to \overline{G} = G\left( N_P \mid P \in \mc{P} \right)$ we have:
\begin{enumerate}
\item For each $P \in \mc{P}$, the canonical map $P/N_P \to \overline{G}$ is injective.  Denote the image by $\overline{P}$;
\item $\left( \overline{G}, \left\{ \overline{P} \mid P \in \mc{P} \right\} \right)$ is relatively hyperbolic;
\item The map $G \to \overline{G}$ is injective on $\mc{F}$.
\end{enumerate}
\end{theorem}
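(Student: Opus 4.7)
The plan is to build a ``filled cusped space'' $\overline{\mc{C}}$ on which $\overline{G} = G/K$ acts naturally, and to show that it remains hyperbolic for sufficiently long fillings; the three conclusions then fall out as features of this action. Starting from the cusped space $\mc{C} = \mc{C}(G,\mc{P},S)$, which is $\nu$--hyperbolic by Definition~\ref{d:RH}, I would form $\overline{\mc{C}}$ by attaching, for each left coset $gP$ and each $n \in N_P$, a $2$--disc along the loop at the base of the horoball over $gP$ representing the word $n$. Equivalently $\overline{\mc{C}}$ is the result of coning each $N_P$--orbit in the corresponding base Cayley graph. The action of $G$ on $\mc{C}$ descends to a cocompact action of $\overline{G}$ on $\overline{\mc{C}}$ with finite point-stabilizers off the horoballs and parabolic stabilizer $\overline{P}$ on the horoball centered at the coset $gP$.

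The main technical step, and what I expect to be the principal obstacle, is to prove $\nu'$--hyperbolicity of $\overline{\mc{C}}$ with $\nu'$ independent of the filling, provided each $N_P$ avoids a sufficiently large ball in $(P, S \cap P)$. The idea is small-cancellation-style: lift any geodesic triangle in $\overline{\mc{C}}$ to a piecewise-geodesic path in $\mc{C}$ by replacing each crossing of a newly-attached $2$--cell with a boundary arc, pay a bounded-length penalty for each lift, and use $\nu$--hyperbolicity of $\mc{C}$ to conclude $\nu'$--thinness in $\overline{\mc{C}}$. The control requires a quantitative bounded coset penetration property that survives the quotient: deep excursions of $\overline{\mc{C}}$--geodesics into horoballs must track excursions in $\mc{C}$ up to a uniform error, and the large injectivity radius of $N_P$ in $P$ is what prevents shortcuts across a filled horoball from producing fat triangles.

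Granting hyperbolicity of $\overline{\mc{C}}$, the remaining work is largely bookkeeping. For conclusion (2), I would observe that $\overline{\mc{C}}$ is equivariantly quasi-isometric to the cusped space for $(\overline{G}, \overline{\mc{P}})$ built from a compatible generating set (the new horoballs are, up to quasi-isometry, combinatorial horoballs over the filled Cayley graphs of $\overline{P}$), so Definition~\ref{d:RH} yields relative hyperbolicity. For (1), the setwise stabilizer of a horoball in $\overline{\mc{C}}$ is the image of $P$, and comparing the action of $P/(\ker(P \to \overline{G}))$ on the base Cayley graph of the horoball with that of $P/N_P$ forces $\ker(P \to \overline{G}) \cap P = N_P$. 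For (3), given finite $\mc{F} \subset G$ consider the finite set $\mc{F}' = \{gh^{-1} \ne 1 : g, h \in \mc{F}\}$; if the filling is long enough that no element of $\bigcup_{P} N_P$ lies within $S$--distance $C$ of the identity, for $C$ sufficiently large compared with $\max_{w \in \mc{F}'}|w|_S$ and the hyperbolicity constant, then a standard linear isoperimetric argument in the hyperbolic space $\overline{\mc{C}}$ rules out any van Kampen diagram over the filling relators with boundary label in $\mc{F}'$, so $G \to \overline{G}$ is injective on $\mc{F}$.
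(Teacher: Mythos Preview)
The paper does not prove this theorem; it is quoted as a black-box result from Osin \cite{osin:peripheral}, with a parenthetical pointer to the alternative proof in \cite{GM08}. So there is no ``paper's own proof'' to compare against. Your sketch is essentially the Groves--Manning approach of \cite{GM08}: build a cusped space, pass to a filled version, and prove uniform hyperbolicity of the quotient for long fillings. Osin's original argument is rather different, working directly with relative isoperimetric functions and van Kampen diagrams over relative presentations rather than with the cusped-space model.

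Your outline is broadly sound, but one point deserves care: the filled space you describe (attaching $2$--cells along $N_P$--words at the base of each horoball) is not literally the object analyzed in \cite{GM08}. There one takes the honest quotient $\leftQ{\mc{C}}{K}$, which collapses each horoball to a horoball over $P/N_P$; your coned space is only homotopy-equivalent to this, and transferring the hyperbolicity argument requires either working with that quotient directly or invoking a combination/asphericity statement to pass between the two models. Similarly, your proposed lifting of triangles ``paying a bounded-length penalty'' glosses over the genuine work: the combinatorial Gauss--Bonnet/small-cancellation argument that controls how many $N_P$--cells a minimal disc can meet, which is where the ``sufficiently long'' hypothesis actually enters. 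If you intend to write this out, you should either follow the diagrammatic estimates of \cite{GM08} or the relative isoperimetric estimates of \cite{osin:peripheral}; the informal lifting sketch as stated is not yet a proof.
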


In \cite{AGM09,AgolVirtualHaken,GM-QCDF} various results have been proved controlling the image of relatively quasi-convex subgroups under Dehn filling.  We follow \cite{GM-QCDF}.

\begin{definition} \label{def:induced filling}
Let $(G,\mc{P})$ be relatively hyperbolic, and $H \le G$ be relatively quasi-convex.  Let $\mc{D}$ be the induced peripheral structure on $H$, and for each $D \in \mc{D}$ let $P_D \in \mc{P}$ and $c_D \in G$ be so $D \le P_D^{c_D}$.  Let $G \to G\left( N_P \mid P \in \mc{P} \right)$ be a Dehn filling.  For $D \in \mc{D}$, write $N_D = N_{P_D}^{c_D} \cap D$.  The \emph{induced filling kernels for $(H,\mc{D})$} are $\left\{ N_D \right\}_{D \in \mc{D}}$.   The \emph{induced filling for $H$} is the filling $H \to H\left( N_D \mid D \in \mc{D} \right)$
\end{definition}
Given $(G,\mc{P})$, $H \le G$, a Dehn filling $G \to G \left( N_P \mid P \in \mc{P}\right)$, and the induced filling $H \to H \left( N_D \mid D \in \mc{D} \right)$ as in Definition~\ref{def:induced filling} it is clear that there is an induced map $\overline{\iota} \co H \left( N_D \mid D \in \mc{D} \right) \to 
G \left( N_P \mid P \in \mc{P}\right)$.

The following is an immediate consequence of \cite[Propositions~4.5 and 4.6]{GM-QCDF} and \cite[Lemma 3.7]{GM-QCDF} (which states that a sufficiently long $H$--filling is sufficiently ``$H$--wide").
\begin{theorem} \label{t:Dehn QC}
Suppose $(G,\mc{P})$ is relatively hyperbolic and $H \le G$ is relatively quasi-convex.  For sufficiently long $H$--fillings $G \to \overline{G}$, the
induced map $\overline{\iota} \co H \left( N_D \mid D \in \mc{D} \right) \to 
G \left( N_P \mid P \in \mc{P}\right)$ is injective, with relatively quasi-convex image.
\end{theorem}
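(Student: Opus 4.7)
The plan is to reduce to the $H$-wideness machinery from \cite{GM-QCDF} for injectivity, and then transfer quasi-convexity across the filling by working in the cusped space picture.

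First I would establish injectivity of $\bar\iota$. Let $K \lhd G$ be the kernel of $G \to \bar G$, i.e.\ the normal closure of $\bigcup_P N_P$, and let $K_H \lhd H$ be the normal closure in $H$ of $\bigcup_D N_D$. Clearly $K_H \le K \cap H$, and $\ker(\bar\iota)$ is the image of $K \cap H$ in $H(N_D \mid D \in \mc D)$, so injectivity amounts to the reverse inclusion $K \cap H \le K_H$. This is precisely the $H$--wideness conclusion of \cite[Lemma 3.7]{GM-QCDF}. Intuitively, in an $H$--filling the only peripheral conjugates $P^g$ on which a filling kernel $N_P^g$ can contribute a relation interacting with $H$ are those with $|H \cap P^g| = \infty$; for those conjugates the $H$--filling hypothesis forces $N_P^g \le H$, so every generator of $K$ that lies in $H$ is already conjugate in $H$ to an element of some $N_D$.

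Next I would prove that $\bar H := \bar\iota(H(N_D \mid D \in \mc D))$ is relatively quasi-convex in $(\bar G, \overline{\mc P})$. The natural approach is to use the cusped spaces $X_G$, $X_H$ of $(G,\mc P)$ and $(H,\mc D)$, together with the cusped spaces $\bar X_G$, $\bar X_H$ of the filled pairs. By Theorem~\ref{t:RHDF}, for sufficiently long fillings $\bar X_G$ is uniformly hyperbolic, and the $H$--equivariant Lipschitz proper map $\check\iota \co X_H \to X_G$ descends to a $\bar H$--equivariant Lipschitz proper map $\bar X_H \to \bar X_G$. To see quasi-convexity of its image, I would take a geodesic in $\bar X_G$ between two points of $\bar H$, lift it to a relative quasi-geodesic in $X_G$ between preimages in $H$, invoke quasi-convexity of $H$ in $(G,\mc P)$ to keep the lift in a uniform neighborhood of $\check\iota(X_H)$, and then project back. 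This lifting mechanism is exactly what \cite[Propositions~4.5 and 4.6]{GM-QCDF} package.

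The main obstacle is controlling those lifts: excursions into horoballs can be much shorter in $\bar X_G$ than in $X_G$, so one must show that a short geodesic in $\bar X_G$ comes from a path in $X_G$ whose horoball penetrations are bounded in depth. This requires the long-filling hypothesis together with Proposition~\ref{prop:MMP} applied to $H$: deep $H$--penetrations of a horoball correspond to genuine infinite intersections of $H$ with the corresponding peripheral conjugate, and by the $H$--filling hypothesis the associated filling kernel then lies in $H$, so its contribution is absorbed into the filling of $H$ rather than creating an uncontrolled shortcut in $\bar X_G$. Once the depth bound is in hand, uniform hyperbolicity of $\bar X_G$ combined with relative quasi-convexity of $X_H$ in $X_G$ yields relative quasi-convexity of $\bar\iota(\bar X_H)$ in $\bar X_G$, completing the proof.
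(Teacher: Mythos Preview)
Your approach is essentially the same as the paper's: the theorem is recorded there as an immediate consequence of \cite[Propositions~4.5 and 4.6]{GM-QCDF} together with \cite[Lemma~3.7]{GM-QCDF}, and you invoke exactly these results with some added explanatory prose. One small point of attribution: \cite[Lemma~3.7]{GM-QCDF} does not itself assert $K \cap H \le K_H$; rather, it shows that sufficiently long $H$--fillings are sufficiently ``$H$--wide'', and both the injectivity and the relative quasi-convexity of $\overline{\iota}$ are then the conclusions of Propositions~4.5 and 4.6 under that wideness hypothesis.
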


\section{Basic properties of relatively geometric actions}\label{s:Basic props}

Throughout this section, we make Assumption~\ref{as:RG}.
Let $q \co \widetilde{X} \to \leftQ{\widetilde{X}}{G}$ be the quotient map.

As noted in \cite{RelCannon}, the results of Charney--Crisp \cite[Theorem 5.1]{CharneyCrisp} immediately imply the following result.

\begin{proposition} \label{prop:Xtilde hyp}
The space $\widetilde{X}$ is quasi-isometric to the coned-off Cayley graph of $(G,\mc{P})$, and in particular is $\delta$--hyperbolic for some $\delta$.
\end{proposition}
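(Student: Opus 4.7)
The plan is to invoke \cite[Theorem 5.1]{CharneyCrisp} directly, as the authors indicate. Charney--Crisp's criterion says that if a relatively hyperbolic pair $(G,\mc{P})$ acts cocompactly by cellular isometries on a geodesic metric space in such a way that every cell stabilizer is either finite or conjugate to a finite-index subgroup of some $P \in \mc{P}$, then the space is $G$--equivariantly quasi-isometric to the coned-off Cayley graph of $(G,\mc{P})$. Items (1) and (3) of Definition~\ref{def:RG} are precisely the cocompactness and stabilizer hypotheses required, so Charney--Crisp applies and produces a $G$--equivariant quasi-isometry between $\widetilde{X}$ and the coned-off Cayley graph of $(G,\mc{P})$.

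To finish, I would use two well-known facts. First, for a relatively hyperbolic pair $(G,\mc{P})$ the coned-off Cayley graph is Gromov hyperbolic; this was essentially Farb's original formulation of relative hyperbolicity, and is equivalent to the cusped space hyperbolicity of Definition~\ref{d:RH} (one passes between the two coarse models by replacing cones on peripheral cosets with combinatorial horoballs, and each of these substitutions changes distances by only a quasi-isometric amount when viewed equivariantly, since each horoball collapses to a bounded-diameter set upon coning its base). Second, Gromov hyperbolicity is a quasi-isometry invariant among geodesic metric spaces, so $\widetilde{X}$ inherits $\delta$--hyperbolicity from the coned-off Cayley graph for some $\delta$. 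Combined with the quasi-isometry of the previous paragraph, this yields both conclusions of the proposition.

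I do not anticipate a substantive obstacle: the proof is a direct application of Charney--Crisp together with the standard quasi-isometric comparison between the coned-off and cusped models of a relatively hyperbolic pair. The only mild point to keep in mind is that Charney--Crisp's hypothesis allows cell stabilizers which are merely conjugate to \emph{finite-index} subgroups of peripherals rather than full peripherals; this matches Definition~\ref{def:RG}(3) verbatim, and item (2) (elliptic action of each $P$) is in fact automatic from item (3) together with the \CAT$(0)$ hypothesis (any group acting with a finite orbit on a \CAT$(0)$ space fixes a point, namely the circumcenter), so nothing additional must be verified.
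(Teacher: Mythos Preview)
Your approach is exactly the paper's: the proposition is recorded as an immediate consequence of \cite[Theorem~5.1]{CharneyCrisp}, with no further argument given. One caveat worth correcting: your parenthetical explanation of why the cusped-space and coned-off definitions agree is misleading---the cusped space and the coned-off Cayley graph are \emph{not} quasi-isometric (combinatorial horoballs have infinite diameter, so they do not ``change distances by only a quasi-isometric amount'' when compared to cones), and the equivalence of definitions instead goes through results such as those in \cite{Hruska2010}; fortunately your main argument does not rely on this aside.
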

Using one of Hruska's characterizations of relatively quasi-convexity from \cite{Hruska2010} we obtain the following result.
\begin{cor}\label{cor:RQC QC}
Suppose that $H$ is a relatively quasi-convex subgroup of $G$, and that $x \in \widetilde{X}$.  The orbit $H \cdot x$ is quasi-convex in $\widetilde{X}$.
\end{cor}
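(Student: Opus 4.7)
The plan is to combine Proposition~\ref{prop:Xtilde hyp} with one of Hruska's equivalent characterizations of relative quasi-convexity from \cite{Hruska2010}: a subgroup $H \le G$ is relatively quasi-convex with respect to $\mc{P}$ if and only if $H$, viewed as a subset of the vertex set of the coned-off Cayley graph $\widehat{\Gamma}$ of $(G,\mc{P})$, is quasi-convex.  Note that $\widehat{\Gamma}$ is itself hyperbolic, a standard consequence of the cusped-space definition of relative hyperbolicity used in this paper.

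I will then check that the quasi-isometry provided by Proposition~\ref{prop:Xtilde hyp} can be realized as a $G$-equivariant orbit map.  Fix $x \in \widetilde{X}$ and define $\phi \colon \widehat{\Gamma}^{(0)} \to \widetilde{X}$ by $\phi(g) = g \cdot x$, extending over each cone point by sending it to a fixed point in $\widetilde{X}$ of the corresponding peripheral subgroup (one exists by Assumption~\ref{as:RG}(2)).  Since the action of $G$ on $\widetilde{X}$ is cocompact and each $P \in \mc{P}$ acts elliptically (hence with bounded orbit), a Milnor--Svarc-type argument---essentially what \cite{CharneyCrisp} establishes---shows that $\phi$ extends to a quasi-isometry.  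By construction, $\phi$ sends $H \subset \widehat{\Gamma}^{(0)}$ to $H \cdot x \subset \widetilde{X}$.

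The conclusion will then follow from the standard fact that a quasi-isometry between hyperbolic geodesic metric spaces carries quasi-convex subsets to quasi-convex subsets; this transfers the quasi-convexity of $H$ in $\widehat{\Gamma}$ to the quasi-convexity of $H \cdot x$ in $\widetilde{X}$.  Independence of the basepoint $x$ is automatic since any two choices give $H$-orbits at bounded Hausdorff distance, which preserves quasi-convexity up to constants.  The main step requiring care will be verifying that the quasi-isometry of Proposition~\ref{prop:Xtilde hyp} can actually be chosen as this equivariant orbit map so that the vertex set $H \subset \widehat{\Gamma}$ maps to $H \cdot x \subset \widetilde{X}$; this is really a Milnor--Svarc statement for relatively geometric actions, and uses the full strength of Assumption~\ref{as:RG}.
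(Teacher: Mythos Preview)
Your proposal is correct and follows essentially the same route as the paper's proof: invoke Hruska's characterization (the paper cites (QC-5)) that an $H$--orbit in the coned-off Cayley graph is quasi-convex, then transport quasi-convexity across the quasi-isometry of Proposition~\ref{prop:Xtilde hyp} using the fact that quasi-isometries between hyperbolic spaces preserve quasi-convexity. The paper simply asserts the existence of ``a quasi-isometry of pairs $(G,H) \to (\widetilde{X}, H\cdot x)$'' where you spell out the orbit-map construction and the basepoint-independence argument, but the underlying strategy is identical.
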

\begin{proof}
According to \cite[(QC-5)]{Hruska2010}, any $H$--orbit in the coned-off Cayley graph is quasi-convex (in fact, (QC-5) is stronger than this).  Since $\widetilde{X}$ is $\delta$--hyperbolic, quasi-isometries take quasi-convex subsets to quasi-convex subsets, so a quasi-isometry of pairs $\left( G, H \right) \to \left(\widetilde{X},H \cdot x\right)$ implies the result. 
\end{proof}

\subsection{Stabilizers of cells}

\begin{notation}
Suppose $G$ is a group and that $H_1, H_2 \le G$.  If $H_1$ and $H_2$ are commensurable in $G$ then we write $H_1 \sim H_2$.
\end{notation}

\begin{definition} \label{def:para stab}
Let $(G,\cal{P})$ be relatively hyperbolic and let $G$ admit a relatively geometric action on a \CAT$(0)$ cube complex $\widetilde{X}$.
For $P \in \mc{P}$ define a sub-complex of $\tilde{X}$
\[	C(P) = \bigcup\limits_{\mbox{\tiny{cells }} \sigma} \{ \sigma \mid \Stab(\sigma) \sim P	\}	.	\]
\end{definition}

\begin{proposition} \label{p:C compact}
For any $P \in \mc{P}$ the set $C(P)$ is compact and convex.
\end{proposition}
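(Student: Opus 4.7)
The plan is to derive both properties from two ingredients: almost-malnormality of the peripheral collection $\mc{P}$, and the CAT(0) fact that fixed-point sets of isometries are convex. The key preliminary reduction is that for any cell $\sigma$ with $\Stab(\sigma) \sim P$, one has $\Stab(\sigma) \le P$ as a finite-index subgroup. Condition~(3) of Definition~\ref{def:RG} writes $\Stab(\sigma) = gQg^{-1}$ for some $P' \in \mc{P}$ and $Q$ of finite index in $P'$; since $\Stab(\sigma) \le gP'g^{-1}$, the intersection $gP'g^{-1} \cap P \supseteq \Stab(\sigma) \cap P$ is infinite by the commensurability $\Stab(\sigma) \sim P$, and almost-malnormality of $\mc{P}$ then forces $P' = P$ and $g \in P$.

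Compactness follows almost immediately. The reduction makes $C(P)$ a $P$-invariant subcomplex in which every $P$-orbit has finite cardinality $[P : \Stab(\sigma)]$, and no $G$-orbit of cells meets $C(P)$ in more than one $P$-orbit: if $g\sigma$ and $\sigma$ both lie in $C(P)$, then $g\Stab(\sigma)g^{-1} \le gPg^{-1}$ is commensurable with $P$, so $gPg^{-1} \cap P$ is infinite and again $g \in P$. Since $\widetilde{X}$ has finitely many $G$-orbits of cells by cocompactness, $C(P)$ is a finite union of cells and is therefore compact.

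For convexity, I would show that the CAT(0) geodesic between any two points $x, y \in C(P)$ lies in $C(P)$. Pick cells $\sigma_x, \sigma_y \in C(P)$ containing $x, y$ respectively. Because the $\Stab(\sigma_i)$-action on the cube $\sigma_i$ factors through its finite cubical symmetry group, a finite-index subgroup of each $\Stab(\sigma_i)$ fixes $\sigma_i$ pointwise; combined with the preliminary reduction, their intersection is a finite-index subgroup $P' \le P$ fixing both $x$ and $y$. Since fixed-point sets of isometries of CAT(0) spaces are closed and convex, $P'$ fixes the geodesic $[x,y]$ pointwise. Any $z \in [x,y]$ lies in the interior of a unique cell $\tau$; the fact that $P'$ fixes an interior point of $\tau$ implies $P' \le \Stab(\tau)$, so $\Stab(\tau)$ is infinite, and rerunning the reduction of the first paragraph yields $\Stab(\tau) \sim P$. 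Thus $z \in \tau \subseteq C(P)$, proving $[x,y] \subseteq C(P)$.

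The main obstacle is the algebraic bookkeeping in the first paragraph, which requires careful attention to which conjugate of which peripheral subgroup appears as a stabilizer; once one knows $\Stab(\sigma) \le P$ with finite index for every $\sigma \in C(P)$, the rest is a routine combination of cocompactness (for compactness) with CAT(0) convexity of fixed-point sets (for convexity).
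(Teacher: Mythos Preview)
Your proof is correct and follows essentially the same approach as the paper: almost-malnormality of $\mc{P}$ forces each $G$--orbit to meet $C(P)$ in a single finite $P$--orbit (giving compactness), and the CAT(0) convexity of fixed-point sets of a finite-index subgroup of $P$ shows the geodesic between two points of $C(P)$ passes only through cells whose stabilizers are again commensurable with $P$. Your version is simply more explicit about the bookkeeping---spelling out the reduction $\Stab(\sigma)\le P$ with finite index and the passage from cell stabilizers to pointwise fixers---while the paper's proof compresses these steps.
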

\begin{proof}
Recall that elements of $\mc{P}$ are almost malnormal (see, for example, Farb \cite[Example 1, p.819]{FarbRelHypGroups}).  Suppose $\sigma \subseteq C(P)$ is a cell and $g \in G \smallsetminus P$.  The stabilizer of $g \cdot \sigma$ is $\Stab(\sigma)^g$, which intersects $P$ in a finite group (since $\Stab(\sigma) \sim P$), and hence $g \cdot \sigma \nsubseteq C(P)$.

It follows that the quotient map $q|_{C(P)} \co C(P) \to X$ is finite-to-one, and since $X$ is compact, $C(P)$ is compact.  It is clear that if $x,y \in C(P)$ then $\Stab(x) \sim P \sim \Stab(y)$ and that if $\gamma$ is the geodesic between $x$ and $y$ then $\Stab(x) \cap \Stab(y) \sim P$ and fixes $\gamma$ pointwise. It follows that $\gamma \subseteq C(P)$, so $C(P)$ is convex, as required.
\end{proof}

We now recall a definition and result from Haglund \cite{Haglund2008}.
\begin{definition} \cite[Definition 2.14]{Haglund2008}
Let $\widetilde{Z}$ be a \CAT$(0)$ cube complex and $A$ be a sub-complex of $\widetilde{Z}$.  The (combinatorial) convex hull of $A$, denoted $\Hull(A)$, is the intersection of all convex sub-complexes containing $A$.
\end{definition}
Given a \CAT$(0)$ cube complex $\widetilde{Z}$ and a hyperplane $H$ in $\widetilde{Z}$, denote $\widetilde{Z} \bbslash H$ the union of cubes of $\widetilde{Z}$ whose intersection with $H$ is empty.  This has two connected components, which are called (combinatorial) half-spaces.  See \cite[Definition 2.15]{Haglund2008} for more details.  According to \cite[Proposition 2.17]{Haglund2008}, any convex sub-complex $Y$ of $\widetilde{Z}$ is the intersection of the half-spaces containing $Y$.  Hence, for any sub-complex $A$ of $\widetilde{Z}$, $\Hull(A)$ is the intersection of the half-spaces containing $A$.

\subsection{Cocompact Cores for Relatively Geometric Groups}

We continue to make Assumption~\ref{as:RG}. 
In this section, we prove Theorem~\ref{t:CCC}. This result builds on \cite{SW2015}.  By Proposition~\ref{prop:Xtilde hyp}, $\widetilde{X}$ is $\delta$--hyperbolic, and by Corollary~\ref{cor:RQC QC}, $H$ acts ``quasi-convexly" on $\widetilde{X}$, in the terminology of \cite{SW2015}.  Thus, the existence of a convex subset of $\widetilde{X}$ preserved by $H$ and finite distance from any given $H$--orbit follows immediately from \cite[Proposition 3.3]{SW2015}. However, $\widetilde{X}$ is locally infinite, so it still takes work to prove that the action of $H$ on this core is cocompact. 
Recall the statement of Theorem~\ref{t:CCC}.

\CCC*
\begin{proof}
The subgroup $H$ is finitely generated by Lemma~\ref{l:fRQC fg}.  Let $h_1,\ldots,h_k$ generate $H$.  By replacing $K$ with the union of the cells whose interiors intersect $K$ we may assume $K$ is a finite sub-complex, and we may add finitely many $1$--cells so $K$ is connected. Since $\Hull(K) = \Hull(K^{(1)})$ we may replace $K$ by its $1$--skeleton.  Enlarge $H \cdot K$ to a connected subset of $\widetilde{X}^{(1)}$ by fixing $x \in K^{(0)}$ and choosing, for each $i \in \{ 1 ,\ldots , k \}$, a geodesic $\gamma_{i}$ between $x$ and $h_i \cdot x$, and replacing $H\cdot K$ by $H \cdot K \cup \left\{ H \cdot \gamma_{i} \mid 1 \le i \le k \right\}$.  Denote this (connected, $H$--cocompact) sub-graph of $\widetilde{X}^{(1)}$ by $\Gamma_H$.

It suffices to prove the combinatorial convex hull of $\Gamma_H$ is $H$--cocompact.  By Corollary~\ref{cor:RQC QC}, $H$--orbits in $\widetilde{X}$ are quasi-convex, so by \cite[Proposition 3.3]{SW2015} there exists $D\ge 0$ so that $\operatorname{Hull}(\Gamma_H)\subseteq \cal{N}_D(H \cdot K)$.
For the following, let $Z:= \Hull(\Gamma_H)$, let $z\in Z^{(0)}$ and let $A = \Stab_H(z)$. 
Let $\mc{E}_{z,H}$ denote the collection of edges $e$ adjacent to $z$ in $\widetilde{X}^{(1)}$ for which the hyperplane $W_e$ dual to $e$ intersects $\Gamma_H$ nontrivially.  Observe $\mc{E}_{z,H}$ is precisely the set of edges adjacent to $z$ which lie in $Z$.  We claim there are only finitely many $A$--orbits of edges in $\mc{E}_{z,H}$.  If $\Stab_G(z)$ is finite, then this is clear from the fact that $\leftQ{\widetilde{X}}{G}$ is compact.  On the other hand, if $A$ is infinite, then since $H$ is full and the $G$--action on $\widetilde{X}$ is relatively geometric, $A$ acts cocompactly on the edges adjacent to $z$, so again the claim is clear.  If $z$ has finite valence there is nothing to show.

The remaining case is that $\Stab_G(z)$ is infinite, $z$ has infinite valence, and $A$ is finite.  For simplicity, since we have fixed $H$ and $z$, we simply write $\mc{E}$ for $\mc{E}_{z,H}$.

Towards a contradiction, suppose $\mc{E}$ is infinite.  For $e \in \mc{E}$ with dual hyperplane $W_e$, let $x_e \in \Gamma_H \cap W_e$, and let $Q_e = \Stab_G(W_e)$.
By passing to an infinite subset of $\mc{E}$ we may assume:

\begin{enumerate}
\item All elements of $\mc{E}$ belong to the same $\Stab_G(z)$--orbit, and
\item All $x_e$ lie in the same $H$--orbit.
\end{enumerate}
We may insist on the first point because $\Stab_G(z)$ acts cocompactly on the set of edges adjacent to $z$, and the second because $H$ acts cocompactly on $\Gamma_H$ (and all of the intersection points $x_e$ lie at the midpoint of some edge in $\widetilde{X}^{(1)}$).  We henceforth make the above three assumptions on $\mc{E}$ (with extra assumptions to come, also ensured by passing to a further infinite subset).

We claim that for all $e\in \mc{E}$, $\left| Q_e \cap \Stab_G(z)\right| <\infty$. Indeed, the $\Stab_G(z)$ orbit of $e$ is infinite, so $\Stab_G(e)$ has infinite index in $\Stab_G(z)$, and because the action is relatively geometric $\Stab_G(e)$ is finite. Since $Q_e \cap \Stab_G(z) \subseteq \Stab_G(e)$, we have the required claim.

Fix $e_1 \in \mc{E}$, with dual hyperplane $W_1 = W_{e_1}$, and let $Q_1 = Q_{e_1} = \Stab_G(W_1)$, and $x_1 = x_{e_1} \in W_1 \cap \Gamma_H$.

For each $e \in \mc{E}$, choose $p_e \in \Stab_G(z)$ so $p_e \cdot e = e_1$, and note that $p_e \cdot W_e = W_1$.  Since $Q_1$ acts cocompactly on $W_1$, and we have $p_e \cdot x_e \in W_1$, there is an infinite subset $\mc{E}' \subset \mc{E}$ so that for all $f_1,f_2 \in \mc{E}'$ there exists $q_{f_1,f_2} \in Q_1$ so that 
$q_{f_1,f_2} p_{f_1} \cdot x_{f_1} = p_{f_2} \cdot x_{f_2}$, which is to say that
$p_{f_2}^{-1} q_{f_1,f_2} p_{f_1} \cdot x_{f_1} = x_{f_2}$.  We have already ensured that $x_{f_1}$ and $x_{f_2}$ lie in the same $H$--orbit, so let $h_{f_1,f_2} \in H$ be so that $h_{f_1,f_2} \cdot x_{f_1} = x_{f_2}$.  We now see that $p_{f_2}^{-1} q_{f_1,f_2} p_{f_1} h_{f_1,f_2}^{-1} \in \Stab_G(x_{f_2})$.

Now, $x_{f_2}$ is the midpoint of an edge dual to the unique hyperplane $W_{f_2}$, so clearly $\Stab_G(x_{f_2}) \le Q_{f_2}$, so there exists $s_{f_1,f_2} \in Q_{f_2}$ so that
\[	p_{f_2}^{-1} q_{f_1,f_2} p_{f_1} h_{f_1,f_2}^{-1} = \left( s_{f_1,f_2} \right)^{-1}	.	\]
From this it follows immediately that 
\[	h_{f_1,f_2}^{-1} s_{f_1,f_2} p_{f_2}^{-1} q_{f_1,f_2} p_{f_1} = 1 .	\]

We now translate to the cusped space $\mc{C} = \mc{C}(G,\mc{P})$ for $(G,\mc{P})$ (see Definition~\ref{def:cusped space}).  Recall from Definition~\ref{d:RH} that the cusped space is $\nu$--hyperbolic for some $\nu$, and that it contains a copy of the Cayley graph of $G$.  So elements of $G$ are vertices in $\mc{C}$ (the other vertices lie at some positive depth and lie in combinatorial horoballs stabilized by the conjugates of elements of $\mc{P}$).

Fix $f_2\in \mc{E}'$, and let $p = p_{f_2}$.  Let $R_H,$ $R_{Q_1}$ and $R_{Q_{f_2}}$ be the constants obtained by applying Proposition~\ref{prop:MMP} to $H,\,Q_1,$ and $Q_{f_2}$, respectively. 

We now consider other edges $f \in \mc{E}'$ and make the following simplifying notational choices: $q_f = q_{f,f_2} \in Q_1$, $h_f = h_{f,f_2} \in H$, $s_f = s_{f,f_2} \in Q_{f_2}$.  Therefore, the above equation becomes
\[	h_f^{-1} s_f p^{-1} q_f p_f = 1	.	\]

We make some observations.  First, $s_f \in Q_{f_2}$ and $h_f = h_{f,f_2}$ was chosen so that $h_f \cdot  x_{f} = x_{f_2}$.  Therefore, $h_f \cdot W_f = W_{f_2}$ and  $h_f^{-1} s_f h_f \in Q_f$.  As above, $Q_f \cap \Stab_G(z)$ and $Q_1 \cap \Stab_G(z)$ are both finite. Since $p_f^{-1} \in \Stab_G(z)$, $Q_1^{p_f^{-1}} \cap \Stab_G(z)$ is also finite.  Finally note that $q_f \in Q_1$.

Now, consider a geodesic pentagon in $\mc{C}$ with vertices $1, h_f\inv, h_f\inv s_f, h_f\inv s_fp^{-1}, h_f\inv s_fp^{-1}q_f$, so the sides are labelled (in order) $h_f\inv$, $s_f$, $p^{-1}$, $q_f$, $p_f$.
Let $B_z$ be the horoball stabilized by $\Stab_G(z)$.  
We claim there exists $R_{\mc{C}}>0$ not depending on $f$ so that $p_f$ penetrates $B_z$ to a depth at most $R_{\mc{C}}$.

Let $\alpha$ be any point on the edge labeled $p_f$. By subdividing the pentagon into 3 triangles and applying a standard hyperbolic geometry argument, $\alpha$ is at most $3\nu$ from one of the other four sides of the geodesic pentagon. Therefore, to prove the claim, it suffices to show that each of the other four sides penetrate $\Stab_G(z)$ to some bounded depth not depending on $f$.  
The length of the side labeled $p$ does not depend on $f$.
Therefore, the claim reduces to proving that the sides labeled $h_f\inv$, $q_f$ and $s_f$ penetrate $B_z$ to a depth bounded independent of $f$. 
%
%

We are in the case where $A = H\cap \Stab_G(z)$ is finite, and the side of the pentagon labeled $h_f\inv$ has endpoints in $H$. Therefore, this side penetrates $B_z$ to a depth at most $R_H$.

Above, we observed $Q_1 \cap \Stab_G(z)$ is finite, so a geodesic with endpoints $q_f^{-1}$ and $1$ penetrates $B_z$ to a depth at most $R_{Q_1}$. Translating this geodesic by $p_f^{-1} = h_f^{-1}s_fp^{-1}q_f \in \Stab_G(z)$ on the left implies that the side of the pentagon labeled $q_f$ penetrates $B_z$ to a depth at most $R_{Q_1}$.

Recall that $|Q_f \cap \Stab_G(z)|<\infty$. Therefore, we have:
\[|Q_{f_2} \cap \Stab_G(z)^{h_f}| = |Q_{f_2}^{h_f^{-1}} \cap \Stab_G(z)| = |Q_f \cap \Stab_G(z)|<\infty.\]
Thus, any geodesic with endpoints $1$ and $s_f\in Q_{f_2}$ penetrates the horoball stabilized by $\Stab_G(z)^{h_f}$ to a depth of at most $R_{Q_{f_2}}$.
Translating by $h_f\inv$,  
the side of the pentagon labeled $s_f$ penetrates $\Stab_G(z)$ to a depth at most $R_{Q_{f_2}}$.  
Hence, we have proved the claim that the side of the pentagon labeled $p_f$ penetrates $B_z$ to a bounded depth not depending on the choice of $f$. 

The bounded depth of $p_f$ in $B_z$ implies a bound on the distance (independent of $f$) 
between $1$ and $p_f$ in the naturally embedded copy of the Cayley graph of $G$ in $C$. 
Since $G$ is finitely generated, there are only finitely many possibilities for $p_f$. 
Recall $p_f \cdot W_f = W_1$, so we obtain a contradiction that implies $\mc{E}$ is finite, as required.

To finish the proof of Theorem~\ref{t:CCC}, we finally prove that the $H$--action on $Z = \Hull(\Gamma_H)$ is cocompact.

For all $y \in Z^{(0)}$ we have $d_{\widetilde{X}}(y,\Gamma_H) \le D$.  
Let $\nu_y$ be a shortest path in $\widetilde{X}^{(1)}$ from $\Gamma_H$ to $y$ so that $|\nu_y|\le D$.  
Also, $\leftQ{\Gamma_H}{H}$ is compact, so up to the $H$--action there are only finitely many possibilities for the starting point of $\nu_y$.  Also, $\nu_y \subseteq Z$, so by the claim each vertex of $\nu_y$ has only finitely many $H$--orbits of edges in $Z$ adjacent to it.  It now follows that the number of $H$--orbits of paths $\nu_y$ is finite, meaning that $\leftQ{Z}{H}$ is compact, as required.
\end{proof}

To ensure that these convex cores are compatible with the relatively hyperbolic geometry, we want to ensure they satisfy a condition similar to fullness.

\begin{cor}\label{L: peripherally full natural}
Let $(G,\cal{P})$ be a group pair acting relatively geometrically on a \CAT$(0)$ cube complex $\widetilde{X}$, let $H$ be a full relatively quasi-convex subgroup. For any compact $K_0 \subseteq \widetilde{X}$ there exists a convex $H$--invariant subset $\widetilde{Y}_{H,K_0} \subset \widetilde{X}$ so that (i) $K_0 \subseteq \widetilde{Y}_{H,K_0}$; (ii) For each $P \in \mc{P}$ we have $C(P) \subset \widetilde{Y}_{H,K_0}$; and (iii) $\leftQ{\widetilde{Y}_{H,K_0}}{H}$ is compact.  Moreover, the $H$--action on $\widetilde{Y}_{H,K_0}$ is relatively geometric.
\end{cor}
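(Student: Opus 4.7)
The plan is to obtain $\widetilde{Y}_{H,K_0}$ by a direct application of Theorem~\ref{t:CCC} to a slightly enlarged compact set, and then verify that the resulting $H$--action satisfies the three conditions of Definition~\ref{def:RG}.

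First I would form $K := K_0 \cup \bigcup_{P \in \mathcal{P}} C(P)$. Finiteness of $\mathcal{P}$ (Definition~\ref{d:RH}) and compactness of each $C(P)$ (Proposition~\ref{p:C compact}) make $K$ compact, and Theorem~\ref{t:CCC} applied to $K$ produces a convex $H$--invariant subcomplex $\widetilde{Y}_{H,K_0} \supseteq K$ whose quotient by $H$ is compact. Properties (i)--(iii) follow immediately.

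For the ``moreover'' clause I would verify Definition~\ref{def:RG} for the $H$--action on $\widetilde{Y}_{H,K_0}$ with respect to the induced peripheral structure $\mathcal{D}$. Cocompactness is exactly (iii). For peripheral ellipticity, given $D \in \mathcal{D}$, fullness of $H$ says $D$ has finite index in some $P^{c_D}$ with $P \in \mathcal{P}$, and since $P^{c_D}$ acts elliptically on $\widetilde{X}$ so does $D$. Because $\widetilde{Y}_{H,K_0}$ is a convex subcomplex, its \CAT$(0)$ metric is the restriction of that on $\widetilde{X}$, so bounded $D$--orbits in $\widetilde{X}$ remain bounded in $\widetilde{Y}_{H,K_0}$, and the Bruhat--Tits fixed point theorem supplies a $D$--fixed point in $\widetilde{Y}_{H,K_0}$.

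For the cell-stabilizer condition, let $\sigma \subset \widetilde{Y}_{H,K_0}$ be a cell with $\Stab_H(\sigma)$ infinite. Relative geometricity of the $G$--action on $\widetilde{X}$ gives that $\Stab_G(\sigma)$ has finite index in some $P^g$, so $\Stab_H(\sigma) = \Stab_G(\sigma) \cap H$ has finite index in the infinite parabolic subgroup $P^g \cap H$ of $H$, which lies in a maximal infinite parabolic $D' \le H$ that is $H$--conjugate to some $D \in \mathcal{D}$. I expect the main obstacle here to be promoting these inclusions to finite index in $D'$: the ambient maximal parabolic of $G$ containing $D'$ must coincide with $P^g$ by almost malnormality of $\mathcal{P}$, and fullness of $H$ then forces $P^g \cap H$ to have finite index in $D'$, closing the argument.
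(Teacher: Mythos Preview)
Your proof is correct and follows the same approach as the paper: form $K = K_0 \cup \bigcup_{P \in \mathcal{P}} C(P)$, apply Theorem~\ref{t:CCC}, and then deduce that the $H$--action is relatively geometric from the fact that the $G$--action is, that $H$ is full relatively quasi-convex, and that the quotient is compact. The paper's proof simply asserts that the ``moreover'' clause follows immediately from these three facts, whereas you spell out the verification of Definition~\ref{def:RG} in detail (including the Bruhat--Tits step for ellipticity and the almost-malnormality argument for cell stabilizers); your detailed verification is correct.
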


\begin{proof}
There are finitely many $P \in \mc{P}$ and by Proposition~\ref{p:C compact} each $C(P)$ is compact, so we can choose $K = K_0 \cup \bigcup_{P\in\mc{P}} C(P) $ and apply Theorem~\ref{t:CCC}.  That the $H$--action is relatively geometric follows immediately from the following facts: (i) the $G$--action on $\widetilde{X}$ is relatively geometric; (ii) $H$ is full relatively quasi-convex; and (iii) $\leftQ{\widetilde{Y}_{H,K_0}}{H}$ is compact.
\end{proof}

\section{Dehn Filling and cube complexes} \label{s:Dehn}
In previous papers such as \cite{AgolVirtualHaken,WiseManuscript,GMOmnibus,RelCannon} the combination of (relatively) hyperbolic groups acting on \CAT$(0)$ cube complexes and the behavior under Dehn filling has yielded very powerful tools.  We continue in this theme, in the context of relatively geometric actions.  
 
The following result is \cite[Proposition 2.3]{RelCannon}, and is an immediate consequence of \cite[Corollary 6.6]{GMOmnibus}. 
 It follows immediately from the definition of relatively geometric that there exists a family $\mc{Q}$ as in the statement below.
\begin{proposition} \cite[Proposition 2.3]{RelCannon} \label{prop:fill CAT(0)}
Suppose $(G,\cal{P})$ is relatively hyperbolic and that $G$ admits a relatively geometric action on a \CAT$(0)$ cube complex $\widetilde{X}$.  Let $\cal{Q}$ be a collection of finite-index subgroups of elements of $\cal{P}$ so that any infinite cell stabilizer contains a conjugate of an element of $\cal{Q}$.  For sufficiently long $\cal{Q}$--fillings 
\[	G \to \overline{G} = G/K	\]
of $(G,\cal{P})$, the quotient $\leftQ{\widetilde{X}}{K}$ is a \CAT$(0)$ cube complex.
\end{proposition}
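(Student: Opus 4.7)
The plan is to reduce the statement directly to Corollary 6.6 of \cite{GMOmnibus}, which provides a general criterion guaranteeing that, under Dehn filling of a relatively hyperbolic group acting on a \CAT$(0)$ cube complex, the quotient remains a \CAT$(0)$ cube complex. The work on our end is essentially bookkeeping: exhibiting the family $\mc{Q}$ and checking that the hypotheses of the omnibus result apply.

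First I would construct $\mc{Q}$ explicitly, justifying the parenthetical claim preceding the proposition. Since the $G$--action on $\widetilde{X}$ is relatively geometric, there are only finitely many $G$--orbits of cells, and each cell stabilizer is either finite or conjugate to a finite-index subgroup of some $P \in \mc{P}$. Pick orbit representatives $\sigma_1, \ldots, \sigma_n$ with infinite stabilizer; for each such $\sigma_i$ choose $g_i \in G$ and $P_i \in \mc{P}$ so that $\Stab_G(\sigma_i)^{g_i}$ is a finite-index subgroup of $P_i$. For each $P \in \mc{P}$ set
\[
Q_P \;=\; \bigcap \bigl\{\, \Stab_G(\sigma_i)^{g_i} \,:\, P_i = P \,\bigr\},
\]
a finite intersection of finite-index subgroups of $P$, hence finite-index in $P$. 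Setting $\mc{Q} = \{Q_P : P \in \mc{P}\}$, every infinite cell stabilizer conjugates into some $P_i$ to a finite-index subgroup containing $Q_{P_i}$, so up to conjugation it contains an element of $\mc{Q}$, as required.

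Next I would apply Corollary 6.6 of \cite{GMOmnibus} to the action of $G$ on $\widetilde{X}$ with this family $\mc{Q}$. That corollary asserts precisely that there is a finite subset $F \subseteq \bigcup \mc{P}$ so that, for any $\mc{Q}$--filling $G \to G/K$ whose filling kernels avoid $F$, the quotient $\leftQ{\widetilde{X}}{K}$ is a \CAT$(0)$ cube complex. This is by definition the conclusion that the filling is ``sufficiently long,'' so the proposition follows immediately.

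The genuinely hard step is the one handled in \cite{GMOmnibus} rather than here: showing that the cubical quotient is non-positively curved, which amounts to verifying Gromov's link condition at every vertex of $\leftQ{\widetilde{X}}{K}$. The delicate case is a vertex $v \in \widetilde{X}$ of infinite valence, where $\Stab_G(v)$ is (virtually) peripheral and the link of $v$ carries a cocompact action of $\Stab_G(v)$; one must check that after quotienting by the induced action of the filling kernel, the link remains flag. The relatively geometric hypothesis is what makes this tractable, since it supplies cocompact stabilizer actions on links, and the $\mc{Q}$--filling hypothesis together with Theorem~\ref{t:RHDF} ensures injectivity of peripheral quotients and freeness of $K$ on hyperplanes for long fillings. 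Once those pieces are in place (in \cite{GMOmnibus}), our statement is a direct citation.
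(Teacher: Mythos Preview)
Your proposal is correct and takes essentially the same approach as the paper: the paper states that the result ``is an immediate consequence of \cite[Corollary~6.6]{GMOmnibus}'' and gives no further argument. Your added construction of $\mc{Q}$ is also fine and matches the paper's parenthetical remark that such a family exists directly from the definition of a relatively geometric action; the discussion of links is accurate expository context but, as you note, belongs to \cite{GMOmnibus} rather than to this proof.
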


\begin{cor} \label{cor:fill to get VS hyp}
 Let $(G,\cal{P})$ and $\cal{Q}$ be as in Proposition~\ref{prop:fill CAT(0)}.  For sufficiently long $\cal{Q}$--fillings $G \to \overline{G} = G\left( N_P \mid P \in \cal{P} \right)$, where each $P/N_P$ is virtually special and hyperbolic, the group $\overline{G}$ is virtually special and hyperbolic.
 
 In particular, for sufficiently long peripherally finite $\cal{Q}$--fillings, $\overline{G}$ is hyperbolic and virtually special.\footnote{Recall a group is \emph{virtually special} if it has a finite-index subgroup which admits a proper and cocompact action on a \CAT$(0)$ cube complex with quotient a special cube complex.}
\end{cor}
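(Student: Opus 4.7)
The plan is to deduce the corollary from Proposition~\ref{prop:fill CAT(0)}, Theorem~\ref{t:RHDF}, and Agol's theorem. I would handle the peripherally finite ``in particular'' statement first, since it is the case actually needed for Corollary~\ref{c:Sep} and it avoids the combination machinery required in the general case.

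In the peripherally finite case, fix filling kernels $N_P \unlhd P$ of finite index inside the chosen $\mc{Q}$--family, and let $\mc{F} \subset G$ be a finite set containing representatives of each $G$--orbit of finite cell stabilizer of $\widetilde{X}$. For sufficiently long fillings, Theorem~\ref{t:RHDF} yields that $(\overline{G}, \{\overline{P}\})$ is relatively hyperbolic, that each $\overline{P} = P/N_P$ embeds in $\overline{G}$, and that $G \to \overline{G}$ is injective on $\mc{F}$. Since each $\overline{P}$ is finite, $\overline{G}$ is hyperbolic. By Proposition~\ref{prop:fill CAT(0)}, $\overline{X} := \widetilde{X}/K$ is a \CAT$(0)$ cube complex and $\overline{G} = G/K$ acts on it cocompactly. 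The cell stabilizers for this $\overline{G}$--action are images of cell stabilizers of $\widetilde{X}$: finite ones remain injective by the choice of $\mc{F}$, and infinite ones (finite-index in some $P^g$) map into a conjugate of the finite group $\overline{P}$. Hence every cell stabilizer in $\overline{X}$ is finite, so $\overline{G}$ acts properly and cocompactly on a \CAT$(0)$ cube complex, and Agol's theorem \cite{AgolVirtualHaken} applies to give that $\overline{G}$ is virtually special.

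For the general case where each $\overline{P} = P/N_P$ is virtually special and hyperbolic but possibly infinite, hyperbolicity of $\overline{G}$ follows from Theorem~\ref{t:RHDF} combined with the standard fact that a group hyperbolic relative to hyperbolic subgroups is itself hyperbolic. Virtual specialness is more delicate: I would pass to a torsion-free finite-index subgroup $\overline{P}_0 \le \overline{P}$ acting freely and cocompactly on a special \CAT$(0)$ cube complex $Y_P$, and blow up each cell of $\overline{X}$ with stabilizer conjugate into $\overline{P}$ by an equivariant copy of $Y_P$ to produce a new \CAT$(0)$ cube complex $\overline{X}'$ on which a finite-index subgroup of $\overline{G}$ acts freely, cocompactly, and with special quotient. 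The main obstacle is carrying out this blow-up globally and equivariantly while preserving both the \CAT$(0)$ structure and specialness of the quotient; the peripherally finite case entirely avoids this step because Agol's theorem applies directly once the cell stabilizers have all become finite.
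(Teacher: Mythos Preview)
Your peripherally finite argument is correct and is a legitimate shortcut: once the cell stabilizers are finite, Agol's theorem applies directly to the proper cocompact $\overline{G}$--action on $\overline{X}$. The paper instead treats both cases uniformly, so in the peripherally finite situation it is implicitly invoking a heavier theorem than necessary.

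For the general case, however, there is a genuine gap. Your ``blow-up'' sketch is not a proof, and the obstacles you flag are real: producing an equivariant \CAT$(0)$ blow-up with special quotient is precisely the hard content you would need to supply. The paper avoids this entirely by invoking \cite[Theorem~D]{GMOmnibus}, which says that a hyperbolic group acting cocompactly on a \CAT$(0)$ cube complex with quasi-convex, virtually special cell stabilizers is itself virtually special. Everything needed to apply that theorem is already in your argument or easily observed: $\overline{G}$ is hyperbolic (as you show), $\overline{X}$ is \CAT$(0)$ and $\overline{G}$--cocompact (Proposition~\ref{prop:fill CAT(0)}), and each cell stabilizer is finite-index in some conjugate of $\overline{P}$, hence virtually special by hypothesis and quasi-convex because peripheral subgroups of a relatively hyperbolic pair are always relatively quasi-convex (so quasi-convex in the hyperbolic group $\overline{G}$). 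Replacing your blow-up paragraph with a citation to \cite[Theorem~D]{GMOmnibus} closes the gap.
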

\begin{proof}
By Theorem~\ref{t:RHDF}, for sufficiently long fillings $G \to \overline{G} = G\left( N_P \mid P \in \cal{P} \right)$ the natural map $P/N_P \to \overline{G}$ is an embedding for each $P \in \mc{P}$ and the pair $\left( \overline{G}, \left\{ P/N_P \mid P \in \mc{P} \right\} \right)$ is relatively hyperbolic.  Thus, if each $P/N_P$ is hyperbolic then $\overline{G}$ itself is a hyperbolic group.  Since $(\overline{G}, \{ P/N_P \})$ is relatively hyperbolic, the subgroups $P/N_P$ of $\overline{G}$ are quasi-convex.
 
By Proposition~\ref{prop:fill CAT(0)} for sufficiently long $\mc{Q}$--fillings the space $\overline{X} := \leftQ{\widetilde{X}}{K}$ is a CAT$(0)$ cube complex, and $\overline{G} = G/K$ acts on $\overline{X}$.
The quotient $\leftQ{\overline{X}}{\overline{G}}$ has the same underlying space as $\leftQ{\widetilde{X}}{G}$, so it is compact.

The cell stabilizers for the $\overline{G}$--action on $\overline{X}$ are finite-index subgroups of the parabolic subgroups, and so these cell stabilizers are also quasi-convex in $\overline{G}$.  Therefore, if each $P/N_P$ is hyperbolic and virtually special then $\overline{G}$ is virtually special by \cite[Theorem D]{GMOmnibus}.
\end{proof}

We now prove Theorem~\ref{t:RGA} from the introduction.  Recall the statement.

\RGAGol*
\begin{proof}
Because the elements of $\mc{P}$ are residually finite, by Proposition~\ref{prop:fill CAT(0)} and Corollary~\ref{cor:fill to get VS hyp} there is a peripherally finite filling $G \to \overline{G} = G/K$ so that $\overline{G}$ is virtually special and $\overline{X} = \leftQ{\widetilde{X}}{K}$ is a CAT$(0)$ cube complex.
Thus, there is a finite-index subgroup $\overline{G}_0 \le \overline{G}$ so that $\leftQ{\overline{X}}{\overline{G}_0}$ is a special cube complex.  Let $G_0$ be the (finite-index) pre-image of $\overline{G}_0$ in $G$, and observe that the underlying space of $\leftQ{\widetilde{X}}{G_0}$ is the same as the underlying space of $\leftQ{\overline{X}}{\overline{G}_0}$.
\end{proof}

\subsection{Cores map to cores under suitable fillings}

Suppose $(G,\cal{P})$ acts relatively geometrically on the \CAT$(0)$ cube complex $\widetilde{X}$ with residually finite peripherals. 
Let $\mathcal{Q}$ be the set of subgroups from Proposition~\ref{prop:fill CAT(0)}, so that for sufficiently long $\mc{Q}$--fillings $G \to G/K$ the space $\leftQ{\widetilde{X}}{K}$ is a CAT$(0)$ cube complex.
Let $H$ be a full relatively quasi-convex subgroup of $G$ and let $\widetilde{Y} \subset \widetilde{X}$ be a convex $H$--invariant and $H$--cocompact sub-complex, the existence of which is guaranteed by Theorem~\ref{t:CCC}.

The following summarizes a collection of known and straightforward results about the existence of certain well-controlled Dehn fillings.
\begin{proposition}\label{prop:lots of filling properties}
For sufficiently long $\left( \mc{Q} \cup \{ H \} \right)$--fillings $G \to G / K$, the following statements hold:
\begin{enumerate}
\item\label{item:RHDF} The induced maps from each $P/N_P$ to $G/K$ are injective, and if $\overline{P}$ is the image of $P/N_P$ then $\left( G/K , \left\{ \overline{P} \mid P \in \mc{P} \right\} \right)$ is relatively hyperbolic;
\item\label{item:induced} If $K_H \unlhd H$ is the kernel of the induced filling of $H$ then $K_H = K \cap H$;
\item\label{item:RQC} If $\overline{H}$ denotes the image of $H$ in $G/K$, and $\overline{\mc{D}}$ is the collection of images of elements of $\mc{D}$ then $\left(\overline{H},\overline{\mc{D}} \right)$ is relatively quasi-convex in $\left( G/K, \left\{ \overline{P} \right\} \right)$;
\item\label{item:image cube} $\overline{X} = \leftQ{\widetilde{X}}{K}$ is a \CAT$(0)$ cube complex;
\item\label{item:H cube} $\overline{Y} = \leftQ{\widetilde{Y}}{K_H}$ is a \CAT$(0)$ cube complex;
\item\label{item:inj_on_int_para} If $Q_1$ and $Q_2$ are distinct maximal parabolic subgroups then $K \cap Q_1 \cap Q_2 = \{ 1 \}$;
\item\label{item:no finite stabilizers} If $S$ is a finite cell stabilizer, then $S\cap K = \emptyset$. 
\item\label{item:immersion} The induced map $\overline{Y} \to \overline{X}$ is an immersion. 
\end{enumerate}
\end{proposition}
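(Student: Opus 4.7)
The plan is to assemble Proposition~\ref{prop:lots of filling properties} almost entirely from results already in the paper, treating item~\ref{item:immersion} as the one piece requiring a new argument. I would split the work into three kinds of steps: direct invocations of cited theorems for items~\ref{item:RHDF}, \ref{item:induced}, \ref{item:RQC}, \ref{item:image cube}, and~\ref{item:H cube}; finite-set-avoidance arguments for items~\ref{item:inj_on_int_para} and~\ref{item:no finite stabilizers}; and a link-level local injectivity argument for item~\ref{item:immersion}.

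For the first step, items~\ref{item:RHDF} and~\ref{item:RQC} are the conclusions of Theorems~\ref{t:RHDF} and~\ref{t:Dehn QC} respectively (the latter using the $H$--filling hypothesis); item~\ref{item:image cube} is Proposition~\ref{prop:fill CAT(0)} applied to the $\mc{Q}$--filling of $(G,\mc{P})$; and item~\ref{item:induced} is a restatement of the injectivity conclusion of Theorem~\ref{t:Dehn QC} as the equality $K_H = K \cap H$. For item~\ref{item:H cube}, I would first invoke Corollary~\ref{L: peripherally full natural} to see that the $H$--action on $\widetilde{Y}$ is relatively geometric with respect to the induced peripheral structure $\mc{D}$; fullness of $H$ ensures each $D \in \mc{D}$ is finite-index in a conjugate of some $P \in \mc{P}$, so the induced filling kernels $\{N_D\}$ are finite-index in the corresponding $D$'s, and Proposition~\ref{prop:fill CAT(0)} applied to the $(H,\mc{D})$--action then yields that $\overline{Y}$ is a \CAT$(0)$ cube complex for sufficiently long $H$--fillings of $G$.

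For the second step, both avoidance arguments are the usual device of excluding a finite set via Theorem~\ref{t:RHDF}(3) and using normality of $K$. Item~\ref{item:no finite stabilizers} uses that compactness of $X$ produces finitely many $G$--conjugacy classes of finite cell stabilizers, so a finite set of representative non-identity elements suffices. Item~\ref{item:inj_on_int_para} uses almost malnormality of $\mc{P}$ to make each $Q_1 \cap Q_2$ finite for distinct maximal parabolics, together with the standard finiteness of $G$--conjugacy classes of such intersections in a relatively hyperbolic group, to produce a finite representative set whose avoidance by $K$ is again supplied by Theorem~\ref{t:RHDF}(3).

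The main content is item~\ref{item:immersion}. I plan to verify local injectivity at each vertex $\bar v \in \overline{Y}^{(0)}$ by lifting to $v \in \widetilde{Y}$ and showing the induced link map
\[	\operatorname{Lk}_{\widetilde{Y}}(v) / \bigl(K_H \cap \Stab_G(v)\bigr) \longrightarrow \operatorname{Lk}_{\widetilde{X}}(v) / \bigl(K \cap \Stab_G(v)\bigr)	\]
is injective; since $\operatorname{Lk}_{\widetilde{Y}}(v) \hookrightarrow \operatorname{Lk}_{\widetilde{X}}(v)$ is already an inclusion, it suffices to prove the equality $K \cap \Stab_G(v) = K_H \cap \Stab_G(v)$. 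When $\Stab_G(v)$ is finite, item~\ref{item:no finite stabilizers} gives $K \cap \Stab_G(v) = \{1\}$. When $\Stab_G(v)$ is infinite, it has finite index in a unique maximal parabolic $Q$; fullness of $H$ forces $H \cap Q$ to be infinite, so the $H$--filling hypothesis gives $K \cap Q \le H$, and combined with item~\ref{item:induced} this yields $K \cap \Stab_G(v) \le K \cap H = K_H$. I expect this last step to be the main obstacle, not because any single ingredient is deep but because it requires carefully tracking where the $H$--filling hypothesis enters and confirming that fullness is exactly what propagates $K \cap Q \le H$ down to $K \cap \Stab_G(v) \le H$.
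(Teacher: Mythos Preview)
Your plan for items~\ref{item:RHDF}--\ref{item:no finite stabilizers} matches the paper's proof essentially verbatim. The gap is in item~\ref{item:immersion}. Your proposed equality $K \cap \Stab_G(v) = K_H \cap \Stab_G(v)$ is false in general, and the step where it breaks is the claim ``fullness of $H$ forces $H \cap Q$ to be infinite''. Fullness is the implication in the \emph{other} direction: \emph{if} $H \cap Q$ is infinite \emph{then} it has finite index in $Q$. It says nothing when $H \cap Q$ is finite. And the convex core $\widetilde{Y}$ from Theorem~\ref{t:CCC} can certainly contain vertices $v$ with $\Stab_G(v)$ infinite but $\Stab_H(v)$ finite --- this is precisely the delicate case in the cocompactness argument of Theorem~\ref{t:CCC} itself. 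At such a vertex, $K \cap \Stab_G(v)$ is typically an infinite (finite-index) subgroup of the parabolic, while $K_H \cap \Stab_G(v) \subseteq H \cap \Stab_G(v)$ is finite, so your equality fails outright.

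The paper handles exactly this missing case by a further finite-exclusion argument. Working with a cell $\tau$ (rather than a vertex) where $\Stab_G(\tau)$ is infinite but $\Stab_H(\tau)$ is finite, one observes that only finitely many cells of $\widetilde{Y}$ contain $\tau$ (by $H$--cocompactness and finiteness of $\Stab_H(\tau)$); among pairs of such cells whose $G$--stabilizers are finite, the set of elements of $\Stab_G(\tau)$ carrying one to the other is finite, and one excludes these from $K$. The remaining sub-case, where some adjacent cell $\sigma_i$ has infinite $G$--stabilizer, is dispatched using the $\mc{Q}$--filling hypothesis (not the $H$--filling one): $K \cap \Stab_G(\tau) \le \Stab_G(\sigma_i)$, so no element of $K$ can move $\sigma_i$ off itself. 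Your link-map framing is fine, but you need to replace the false fullness step with this case analysis.
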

\begin{proof}
Each of the items can be shown to hold for sufficiently long $\left( \mc{Q} \cup \{ H \} \right)$--fillings, and then we can take a single filling to satisfy them all.  Thus, we explain how to ensure each of the items individually.
Item~\ref{item:RHDF} follows from Theorem~\ref{t:RHDF}.  Items~\ref{item:induced} and \ref{item:RQC} follow from Theorem~\ref{t:Dehn QC}.  Item~\ref{item:image cube} follows from Proposition~\ref{prop:fill CAT(0)}.  For Item~\ref{item:H cube}, note that the $H$--action on $\widetilde{Y}$ is relatively geometric by Corollary~\ref{L: peripherally full natural}, so this item also follows from Proposition~\ref{prop:fill CAT(0)}.

Consider Item~\ref{item:inj_on_int_para}.  By \cite[Lemma~2.6]{GM-QCDF} there exists $M>0$ so that if $Q_1,Q_2$ are maximal parabolics,  then $Q_1\cap Q_2$ acts freely on a subset of the Cayley graph of $G$ of diameter at most $M$. Up to the action of $G$ there are only finitely many such subgraphs of the Cayley graph, so there are only finitely many $G$--conjugacy classes of intersections of maximal parabolic subgroups $Q_1 \cap Q_2$. Recall that $|Q_1\cap Q_2|<\infty$.
  Therefore, ensuring Item~\ref{item:inj_on_int_para} involves excluding finitely many elements from the filling kernels, so this item holds for sufficiently long fillings. For Item~\ref{item:no finite stabilizers}, there are finitely many conjugacy classes of finite stabilizers by cocompactness of the action. Item~\ref{item:no finite stabilizers} then follows by again excluding finitely many elements from the filling kernels.

Finally, we prove that we can ensure Item~\ref{item:immersion}.  Consider the set of cells $\tau \in \widetilde{Y}$ so that $\Stab_G(\tau)$ is infinite, but $\Stab_H(\tau)$ is finite.  Since $\leftQ{\widetilde{Y}}{H}$ is compact, there are only finitely many cells $\rho_1, \ldots, \rho_k$ in $\widetilde{Y}$ which contain $\tau$.  For each such distinct pair $\rho_i, \rho_j$ so that $\Stab_G(\rho_i)$ is finite, let $\mc{F}_{i,j}$ be the (possibly empty) finite set of elements $g \in \Stab_G(\tau)$ so that $g \cdot \rho_i = \rho_j$.  There are only finitely many $H$--orbits of cells in $\widetilde{Y}$, so by Theorem~\ref{t:RHDF} for sufficiently long fillings  $G \to G/K$ we have $\mc{F}_{i,j} \cap K = \emptyset$ for all such $i,j$ (and $\tau$).  Now suppose that $G \to G/K$ is such a $\left( \mc{Q} \cup \{ H \} \right)$--filling so that also Items~\ref{item:induced}, \ref{item:image cube} and \ref{item:H cube} hold.  Further, by taking a longer filling if necessary, we suppose that for any cell $\kappa$ in $X$ so that $\Stab_G(\kappa)$ is finite we have $K \cap \Stab_G(\kappa) = \emptyset$. In order to obtain a contradiction, suppose that $\overline{Y} \to \overline{X}$ is not an immersion and let $\overline{\sigma}_1$ and $\overline{\sigma}_2$ be adjacent (distinct) cells in $\overline{Y}$ with the same image in $\overline{X}$.    Note that $\overline{\sigma}_1 \cap \overline{\sigma}_2$ is a cell in $\overline{Y}$.  We may lift to cells $\sigma_1, \sigma_2$ in $\tilde{Y}$ with $\tau = \sigma_1 \cap \sigma_2$ a cell in $\tilde{Y}$.  Since the images of $\overline{\sigma}_1$ and $\overline{\sigma}_2$ are equal in $\overline{X}$ there exists $k \in K$ so that $k \cdot \sigma_1 = \sigma_2$.  Since $\overline{X}$ is a CAT$(0)$ cube complex, we have $k \cdot \tau = \tau$, so $k \in \Stab_G(\tau)$.  If $\Stab_G(\tau)$ is finite, then we have $K \cap \Stab_G(\tau) = \emptyset$, so there is no such $k$.  Therefore, we may assume that $\Stab_G(\tau)$ is infinite.  If $\Stab_H(\tau)$ is also infinite, then because we have a relatively geometric action, and the filling is an $H$--filling, we have $K \cap \Stab_G(\tau) \le H$, which means that $\overline{\sigma}_1 = \overline{\sigma}_2$ in $\overline{Y}$, contrary to our choice.  

We are left with the possibility that $\Stab_G(\tau)$ is infinite, but $\Stab_H(\tau)$ is finite. If $\Stab_G (\sigma_i)$ is infinite, it is commensurable into $\Stab_G(\tau)$. Then, since the filling is a $\mc{Q}$--filling, $K\cap \Stab_G(\tau) \le \Stab_G(\sigma_i)$,  which contradicts the equation $k \cdot \sigma_1 = \sigma_2$.  
Finally, suppose that $\Stab_G(\sigma_1)$ and $\Stab_G(\sigma_2)$ are both finite, but that $\Stab_G(\tau)$ is infinite (and $\Stab_H(\tau)$ is still finite).  In this case, we have $\sigma_1 = \rho_i$ and $\sigma_2 = \rho_j$ for some $i$ and $j$, where the $\rho_i$ and $\rho_j$ are as chosen above. 
If $\Stab_G(\rho_i)$ is infinite and we clearly have $k \in \mc{F}_{i,j}$, contradicting the assumption that $\mc{F}_{i,j}\cap K =\emptyset$ about our filling.  This completes the proof.
\end{proof}

We now prove the main result of this section, which immediately implies Theorem~\ref{t:fill hull} from the introduction.
 \begin{theorem}\label{completion loc iso}
For all sufficiently long $\left(\mc{Q} \cup \{ H \} \right)$--fillings $G\to \overline{G}$, the immersion $f:\overline{Y}\to \overline{X}$ is an embedding, and (the image of) $\overline{Y}$ is convex in $\overline{X}$.
\end{theorem}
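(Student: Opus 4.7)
The plan is to reduce everything to a single Key Lemma: for sufficiently long $(\mc{Q} \cup \{H\})$--fillings, if $\sigma, \tau \in \widetilde{Y}$ are cells and $k \in K$ satisfies $k \cdot \sigma = \tau$, then $k \in K_H \cdot \Stab_G(\sigma)$, so $\sigma$ and $\tau$ lie in the same $K_H$--orbit. Injectivity of $f : \overline{Y} \to \overline{X}$ is then immediate: two cells of $\overline{Y}$ with the same image in $\overline{X}$ lift to cells of $\widetilde{Y}$ differing by some $k \in K$, which the Key Lemma forces into a common $K_H$--orbit, so they already coincide in $\overline{Y}$.  Combined with the immersion statement (Item~\ref{item:immersion}), this gives the embedding.

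To prove the Key Lemma I would split on the nature of $\Stab_G(\sigma)$.  In the finite-stabilizer case, $H$--cocompactness of $\widetilde{Y}$ lets me fix a compact fundamental domain $D \subset \widetilde{Y}$ and write $\sigma = h_1 \sigma_0$, $\tau = h_2 \tau_0$ with $\sigma_0, \tau_0 \in D$, so that $h_2^{-1} k h_1$ lies in a finite set $F \subset G$ of elements carrying cells of $D$ with finite stabilizer to other cells of $D$.  The requirement $k \in K$ translates into $F \cap HK \neq \emptyset$; one then excludes the finitely many elements of $F \setminus H$ from the preimage $HK$ of $\overline{H}$ in $\overline{G}$.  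This is a standard separation step for sufficiently long $H$--fillings, supported by the relative quasi-convexity of $\overline{H}$ (Item~\ref{item:RQC}).  In the parabolic-stabilizer case, fullness of $H$ gives $|H \cap \Stab_G(\sigma)| = \infty$, so the $H$--filling hypothesis forces $K \cap P^c \le H$ for the parabolic $P^c$ containing $\Stab_G(\sigma)$; almost malnormality of $\mc{P}$ applied to the conjugation relation $k P^c k^{-1} \supseteq \Stab_G(\tau)$ then pins $k$ into $K_H \cdot \Stab_G(\sigma)$.

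For convexity I would use the local-to-global principle for \CAT$(0)$ cube complexes: an embedded subcomplex is convex once it is locally convex at every vertex and contains every cube whose $0$--skeleton lies in it.  Local convexity at $\overline{v} \in \overline{Y}$ is verified by lifting: the link of any lift $v \in \widetilde{Y}$ is a full subcomplex of the link of $v$ in $\widetilde{X}$ by convexity of $\widetilde{Y}$, and the identity $K \cap \Stab_G(v) = K_H \cap \Stab_G(v)$ (Item~\ref{item:no finite stabilizers} in the finite case; the $H$--filling property $K \cap P^c \le H$ in the parabolic case) ensures that passing to the relevant quotient links preserves fullness.  The $0$--skeleton condition is handled by the Key Lemma: a cube in $\overline{X}$ whose vertices lie in $\overline{Y}$ lifts to a cube in $\widetilde{X}$ whose vertices each have a $K$--translate in $\widetilde{Y}$, and chaining the Key Lemma along the edges of the cube produces a single $K$--translate placing the entire cube into $\widetilde{Y}$.

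The main obstacle is the finite-stabilizer case of the Key Lemma, namely separating the finite set $F \setminus H$ from the enlargement $HK$ in $\overline{G}$.  This strengthens what is immediate from Items~\ref{item:induced} and~\ref{item:RQC} and must be extracted from the relative quasi-convexity of $\overline{H}$ together with an ``$H$--wide'' control on the filling in the spirit of \cite[Lemma~3.7]{GM-QCDF}.  A secondary subtlety is the cube-chaining step in the convexity argument: individual vertex-by-vertex $K$--translations must be shown to assemble coherently into a single translate of the whole cube, which again relies on iterated application of the Key Lemma along the edges.
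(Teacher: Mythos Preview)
Your proposal has a genuine gap in the parabolic case of the Key Lemma. You claim that for $\sigma \in \widetilde{Y}$ with $\Stab_G(\sigma)$ infinite, ``fullness of $H$ gives $|H \cap \Stab_G(\sigma)| = \infty$''. This is false. Fullness only says that \emph{if} $H \cap P^c$ is infinite \emph{then} it has finite index in $P^c$; it gives no lower bound. Cells $\sigma \in \widetilde{Y}$ with $\Stab_G(\sigma)$ infinite but $\Stab_H(\sigma)$ finite do occur --- this is exactly the ``remaining case'' treated at length in the proof of Theorem~\ref{t:CCC}, and it reappears in the proof of Proposition~\ref{prop:lots of filling properties}.\eqref{item:immersion}. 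In this situation the $H$--filling hypothesis places no constraint on $K \cap P^c$, so your argument that $k \in K_H \cdot \Stab_G(\sigma)$ collapses. The same gap recurs in your local convexity step, where you assert $K \cap \Stab_G(v) = K_H \cap \Stab_G(v)$ in the parabolic case via the $H$--filling property.

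Your finite-stabilizer case also asks for more than is readily available: you need the finitely many $f \in F \setminus H$ to miss the coset union $HK$, i.e.\ $\overline{f} \notin \overline{H}$ in $\overline{G}$. This is a genuine coset-separation statement under filling, strictly stronger than excluding finitely many elements from $K$ itself, and you correctly flag it as the main obstacle without resolving it.

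The paper sidesteps both difficulties by a different route. Rather than proving injectivity via your Key Lemma, it proves directly that $f$ is a \emph{locally convex} immersion: given edges $\overline{e}_1, \overline{e}_2$ of $\overline{Y}$ bounding a square in $\overline{X}$ but not in $\overline{Y}$, one lifts to $\widetilde{Y}$, analyses the element $k \in K \cap \Stab_G(x)$ carrying the missing edge, and derives a contradiction by a case split on whether $\Stab_G(e_i)$ is finite or infinite. The point is that the obstruction always comes down to $k$ lying in a finite set $\mc{F}$ determined by $\widetilde{Y}$ alone, and one only needs $K \cap \mc{F} = \emptyset$ --- an exclusion from $K$, not from $HK$, which follows directly from Theorem~\ref{t:RHDF}. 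Once local convexity is established, the standard fact that a locally convex combinatorial immersion into a \CAT$(0)$ cube complex is a convex embedding finishes the proof in one line. This avoids both your separation problem and the troublesome infinite-$G$-stabilizer/finite-$H$-stabilizer case of the Key Lemma.
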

\begin{proof}
Let $\mc{Q}$ be the set of subgroups as in Proposition~\ref{prop:fill CAT(0)}.  Let $\mc{D}$ be the induced peripheral structure for $H$.  Let $\mc{Q}_H = \mc{Q} \cup \{ H \}$.

We only consider $\mc{Q}_H$--fillings which are long enough to satisfy the conclusions of Proposition~\ref{prop:lots of filling properties}.  We impose a further condition below so that if both conditions are simultaneously satisfied then $\overline{Y}$ is convex in $\overline{X}$.

For a cell $\sigma \subseteq \widetilde{X}$, denote the $G$--orbit of $\sigma$ by $\orbit{\sigma}$.  Moreover, for a cell $\tau \subseteq \widetilde{Y}$, let $\orbit{\tau}_H$ denote the $H$--orbit of $\tau$.
Consider the set $\mc{S}$ of all pairs $\left( \orbit{e_1}_H, \orbit{e_2}_H \right)$, where $e_1$ and $e_2$ are (oriented) edges in $\widetilde{Y}$ so that there exist $e_1' \in  \orbit{e_1}_H$ and $e_2' \in  \orbit{e_2}_H$ so that $e_1'$ and $e_2'$ have the same initial vertex, and so that $e_1'$ and $e_2'$ bound the corner of a square in $\widetilde{X}$.  Since $\leftQ{\widetilde{Y}}{H}$ is compact, $\mc{S}$ is finite.  By rechoosing $e_2$ if necessary, we always assume that if $\left( \orbit{e_1}_H, \orbit{e_2}_H \right) \in \mc{S}$ then the edges $e_1, e_2$ in $\widetilde{Y}$ share the same initial vertex.

Let $\mc{S}'$ denote the set of all $\rho = \left( \orbit{e_1}_H, \orbit{e_2}_H \right) \in \mc{S}$ so that
\begin{enumerate}
\item $e_1$ and $e_2$ do not bound a square in $\widetilde{Y}$; and
\item $\Stab_G(e_1)$ and $\Stab_G(e_2)$ are finite.
\end{enumerate}
Let $\rho = \left( \orbit{e_1}_H, \orbit{e_2}_H \right) \in \mc{S}'$.
Since $\widetilde{Y}$ is convex in $\widetilde{X}$, $e_1$ and $e_2$ do not bound a square in $\widetilde{X}$ either.  Furthermore, since $\Stab_G(e_1)$ is finite and because the action of $G$ on $\widetilde{X}$ is cocompact, there are only finitely many squares $f_1, \ldots, f_k$ adjacent to $e_1$ in $\widetilde{X}$.  For each $f_i$, let $\hat{e}_2^i$ be the edge of $f_i$ which shares the initial vertex of $e_1$. 

Since $\Stab_G(e_2)$ is finite there are only finitely many elements $g \in G$ so that for some $1 \le i \le n$ we have $g \cdot \hat{e}_2^i = e_2$.  Let $\mc{F}_\rho$ denote the set of all such $g$, and let $\mc{F}$ be the union of the $\mc{F}_\rho$ over all $\rho \in \mc{S'}$. Note that each $\mc{F}_\rho$ is finite and $\mc{S'}\subseteq\mc{S}$ is finite, so $\mc{F}$ is finite.

By Theorem~\ref{t:RHDF} for sufficiently long fillings $G \to \rightQ{G}{K}$, $K \cap \mc{F} = \emptyset$.
Fix now a $\mc{Q}_H$--filling $G \to \rightQ{G}{K}$ long enough to satisfy the conclusion of Proposition~\ref{prop:lots of filling properties}, and also so that $K \cap \mc{F} = \emptyset$.

We claim that with such a filling, and the notation as above, the subspace $\overline{Y}$ is convex in $\overline{X}$.
In order to obtain a contradiction, suppose that there are edges $\overline{e}_1, \overline{e}_2 \in \overline{Y}$ so that $\overline{e}_1$ and $\overline{e}_2$ do not bound a square in $\overline{Y}$ but they do bound a square $\overline{f}$ in $\overline{X}$.

Lift $\overline{e}_1$ to an edge $e_1$ in $\widetilde{Y}$, and $\overline{e}_2$ to an edge $e_2$ with the same initial vertex, $x$ say, as $e_1$.  Let $f$ be a lift of $\overline{f}$ to $\widetilde{X}$ so that $e_1$ is an edge on the boundary of $f$.  Since $\overline{f} \not\in \overline{Y}$, we see that $f \not\in \widetilde{Y}$.   Moreover, there is no square in $\widetilde{X}$ with $e_1$ and $e_2$ at a corner, because $\widetilde{Y}$ is convex in $\widetilde{X}$.

Let $\hat{e}_2$ be the edge on the boundary of $f$ with initial point $x$.  Because the images of $\hat{e}_2$ and $e_2$ are both $\overline{e}_2$ in $\overline{X}$, there exists $k \in K \cap \Stab_G(x)$ so $k \cdot \hat{e}_2 = e_2$.

First suppose that $\Stab_G(e_1)$ is infinite.  Since the $G$--action on $\widetilde{X}$ is relatively geometric, $\Stab_G(e_1)$ is finite-index in $\Stab_G(x)$, and since $G \to G/K$ is a $\mc{Q}_H$--filling, $\Stab_G(x) \cap K \le \Stab_G(e_1)$.  Therefore, $k \cdot e_1 = e_1$, so $k\cdot f$ is a $2$--cell which has $e_1$ and $e_2$ as a corner.  This is a contradiction, so $\Stab_G(e_1)$ is finite.

Now suppose that $\Stab_G(e_2)$ is infinite.  In this case, $K \cap \Stab_G(x) \le \Stab_G(e_2)$, which contradicts the equation $k \cdot \hat{e}_2 = e_2$.  Therefore, $\Stab_G(e_2)$ is also finite.

Using $\orbit{\cdot}$ to denote orbits as above, $\rho = \left( \orbit{e_1}_H, \orbit{e_2}_H \right) \in \mc{S}'$.  Then $k \in \mc{F}_\rho \subseteq \mc{F}$, contradicting $K \cap \mc{F} = \emptyset$.   This contradiction proves the immersion $\overline{Y} \to \overline{X}$ is locally convex.  Since $\overline{X}$ is a \CAT$(0)$ cube complex, a locally convex immersion is an embedding with convex image, completing the proof.
 \end{proof}

\section{Completion and retraction with complexes of groups}\label{s:CCR}

In this section we prove Theorem~\ref{t:VVR}.  Our approach is to prove a relatively geometric analogue of the canonical completion and retraction due to Haglund and Wise \cite{HW08} (see Theorem~\ref{t:Completion_retraction} below).  We prove this by applying a Dehn filling as in Proposition~\ref{prop:lots of filling properties} and Theorem~\ref{completion loc iso}, applying Agol's Theorem \cite[Theorem 1.1]{AgolVirtualHaken}, passing to a carefully chosen finite-index subgroup, applying the Haglund-Wise construction, and then noting that the retraction of cube complexes induces a retraction of complexes of groups.  For the basic  theory of complexes of groups, we refer to \cite[III.$\mc{C}$]{BridsonHaefliger}.  

\begin{remark}
The completion and retraction we construct in Theorem~\ref{t:Completion_retraction} below relies on a particular Dehn filling, and so is not as ``canonical" as that of Haglund--Wise.
\end{remark}

\subsection{The canonical completion for relatively geometric complexes of groups}
The following result summarizes Haglund and Wise's construction of the canonical completion and retraction for special cube complexes.
\begin{theorem} \cite[$\S6$]{HW08}, \cite[$\S3$]{HW2012} \label{t:CCR}
Suppose that $A$ and $B$ are cube complexes, that $B$ is special, that $A$ is compact and that $f \co A \to B$ is a locally convex combinatorial map. There exists a pair of cube complexes $\boldsymbol{C}(A,B)$ and $\boldsymbol{C}_{\boxminus}(A,B)$, along with
\begin{enumerate}
\item A homeomorphism $s \co  \boldsymbol{C}_{\boxminus}(A,B) \to \boldsymbol{C}(A,B)$;
\item A finite (combinatorial) covering $p\co \boldsymbol{C}(A,B) \to B$;
\item A (combinatorial) embedding $i \co A \to \boldsymbol{C}_{\boxminus}(A,B)$ so that $s \circ i$ is a (combinatorial) embedding, and $p \circ s \circ i = f$;
\item A cellular retraction $r \co \boldsymbol{C}_{\boxminus}(A,B) \to A$ (so $r \circ i = \operatorname{Id}_{A}$).
\end{enumerate}
The following diagram commutes:

\medskip
\
\centerline{
\xymatrix{
& \boldsymbol{C}_{\boxminus}(A,B) \ar@{>}@/_/[dl]_{r} \ar@{>}[r]^{s} & \boldsymbol{C}(A,B)\ar@{>}[d]^p \\
A \ar@{>}[ur]_{i} \ar@{>}[rr]_{f}  & & B
}}
\medskip
\end{theorem}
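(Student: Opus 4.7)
The plan is to follow Haglund and Wise's original construction from \cite{HW08,HW2012}, proceeding combinatorially through the skeleta. First I would use the specialness of $B$ to obtain a consistent transverse orientation on each hyperplane $W$ of $B$, and to invoke the no-self-osculation and no-inter-osculation conditions. For each hyperplane $W$ of $B$, record the finite set $\mc{E}_A(W) \subseteq A^{(1)}$ of edges of $A$ mapping under $f$ to an edge of $B$ dual to $W$.

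Next, I would define $\boldsymbol{C}(A,B)$ as a cover of $B$ with $|V(A)|$ sheets, built hyperplane-by-hyperplane. On vertices, take $\boldsymbol{C}(A,B)^{(0)} = V(B) \times V(A)$. For each edge $e$ of $B$ dual to $W$ with endpoints $b_-, b_+$, place $|V(A)|$ edges over $e$ whose endpoints are specified by a bijection $\phi_e \co \{b_-\} \times V(A) \to \{b_+\} \times V(A)$. This bijection is partially prescribed by the elements of $\mc{E}_A(W)$ mapping to $e$ (each such edge of $A$ forces a single pair in $\phi_e$) and otherwise completed arbitrarily. Then I would lift squares, and then higher cubes, of $B$ wherever their boundary has already been lifted as a closed loop or sphere in $\boldsymbol{C}(A,B)$; the specialness of $B$ together with local convexity of $f$ guarantees that each cube of $B$ acquires exactly $|V(A)|$ lifts, yielding a finite covering $p\co \boldsymbol{C}(A,B)\to B$.

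The embedding $i$ sends $v\in V(A)$ to $(f(v),v)$, extending to edges and higher cubes of $A$ using the partial prescriptions built into $\phi_e$; local convexity of $f$ is what ensures $i$ is a combinatorial embedding on cubes. The retraction $r\co \boldsymbol{C}_{\boxminus}(A,B) \to A$ collapses the extra sheets onto $i(A)$: a vertex $(b,v) \in \boldsymbol{C}_{\boxminus}(A,B)^{(0)}$ is sent to the vertex $v' \in V(A)$ with $f(v') = b$ lying in the sheet paired to $v$ under the prescribed bijections, and this extends cellularly. The auxiliary complex $\boldsymbol{C}_{\boxminus}(A,B)$ and homeomorphism $s$ are a notational convention allowing $r$ to be realized as a genuine cellular map (they differ from $\boldsymbol{C}(A,B)$ only in how edges that would otherwise collapse are bookkept).

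The main obstacle is verifying that the combinatorial construction genuinely yields a cube-complex cover, i.e., that squares and higher cubes of $B$ admit exactly $|V(A)|$ consistent lifts. This is precisely where the specialness of $B$ (ruling out the osculation pathologies that would obstruct coherent sheet-pairings across adjacent hyperplanes) and the local convexity of $f$ (guaranteeing that the cube lifts of $A$ are consistent with those of $B$) are essential. Once the cover is established, the embedding property for $i$ and the retraction property for $r$ follow from tracking hyperplane labels through the construction.
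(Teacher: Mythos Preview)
The paper does not prove this theorem; it is recorded as a black-box citation of \cite{HW08,HW2012}, with only the remark afterward that $\boldsymbol{C}_{\boxminus}(A,B)$ is obtained from $\boldsymbol{C}(A,B)$ by subdividing certain cubes outside the image of $A$, so that $r$ can be made cellular by orthogonal projection of each cube onto a face. There is therefore no in-paper argument to compare against, only your sketch versus the actual Haglund--Wise construction.

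Your outline is in the right spirit but contains a genuine gap. You say the bijections $\phi_e$ are ``completed arbitrarily'' on the part of $V(A)$ not touched by $\mc{E}_A(W)$. This cannot work: for a square of $B$ bounded by edges $e_1,e_2$ dual to distinct hyperplanes $W_1,W_2$, the square lifts to $\boldsymbol{C}(A,B)$ exactly when the permutations $\phi_{e_1}$ and $\phi_{e_2}$ commute on $V(A)$, and arbitrary completions have no reason to commute. Haglund--Wise resolve this by a specific canonical completion: for each hyperplane, the edges of $A$ in that direction define a partial injection on $V(A)$ consisting of arcs and cycles; each arc is closed to a cycle by adding a single returning edge, and isolated vertices are fixed. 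It is this particular choice, combined with local convexity of $f$ and the absence of inter-osculation in $B$, that forces the resulting permutations to commute over each square. You correctly identify this lifting step as the main obstacle, but your sketch does not supply the mechanism that overcomes it.

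Two smaller points. Your description of $\boldsymbol{C}_{\boxminus}(A,B)$ as merely a notational convention is inaccurate: as the paper notes, it is a genuine cubical subdivision, required because $r$ collapses some cubes onto lower-dimensional faces and so is not cellular on the original cubulation. And your formula for $r$ on vertices, sending $(b,v)$ to some $v'$ with $f(v')=b$, cannot be right in general, since $f$ need not be surjective on vertices; the Haglund--Wise retraction is instead built from the canonical-completion data (tracking which cycle of each hyperplane-permutation a given vertex belongs to), not from a fibrewise inversion of $f$.
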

As described in \cite[Definition 3.5]{HW2012}, $\boldsymbol{C}_{\boxminus}(A,B)$ is obtained from $\boldsymbol{C}(A,B)$ by sub-dividing certain cubes (some of those outside of the image of $A$), and the map $r$ maps each cube onto a face of a cube in the target by orthogonal projection.

Our goal is to set up a situation of complexes of groups so that the underlying spaces are arranged in a diagram as above.  We then explain how to turn the corresponding maps into morphisms of complexes of groups, giving Theorem~\ref{t:VVR}.  We now record the set up to our construction in the following assumption, which builds on Assumption~\ref{as:RG}.

\begin{assumption} \label{as:CCR}
Suppose that $\left( G,\mc{P} \right)$ is relatively hyperbolic, that $\widetilde{X}$ is a \CAT$(0)$ cube complex which admits a relatively geometric action of $G$ with respect to $\mc{P}$, and let $X = \leftQ{\widetilde{X}}{G}$.  Further, suppose all elements of $\mc{P}$ are residually finite.

Let $H$ be a full relatively quasi-convex subgroup, and let $\left( H , \mc{D} \right)$ be the peripheral structure on $H$ induced from $(G,\mc{P})$.   If $D \in \mc{D}$ and $D \le P_D^{c_D}$ for some $c_D \in G$ and $P_D \in \mc{P}$, let $C(P_D)$ be the sub-complex of $\widetilde{X}$ associated to $P_D$ from Definition~\ref{def:para stab}.  Let $\widetilde{Y} \subset \widetilde{X}$ be a convex $H$--invariant and $H$--cocompact subcomplex as in the conclusion of Theorem~\ref{t:CCC} so that for each $D \in \mc{D}$ we have $c_D \cdot C(P_D) \subset \widetilde{Y}$ (this can be ensured by Proposition~\ref{p:C compact} and Theorem~\ref{t:CCC}).

Let $\mc{Q}$ be a collection of subgroups as in the hypotheses of Proposition~\ref{prop:fill CAT(0)}.

Let $\pi \co G \to \overline{G} = G\left( N_P \mid P \in \mc{P} \right) = G/K$ be a peripherally finite $\left( \mc{Q} \cup \{ H \} \right)$--filling which satisfies the conclusions of Proposition~\ref{prop:lots of filling properties} and Theorem~\ref{completion loc iso}.

Let $\overline{H}$ be the image of $H$ in $\overline{G}$, and let $\overline{X} = \leftQ{\widetilde{X}}{K}$, $\overline{Y} = \leftQ{\widetilde{Y}}{K_H}$ be as in Proposition~\ref{prop:lots of filling properties}.\end{assumption}
For the remainder of this section, we make Assumption~\ref{as:CCR}.

%
%
%
%

\begin{proposition}
The group $\overline{G}$ is hyperbolic and virtually special.  In particular, it is residually finite and virtually torsion-free.  Moreover, there is a finite-index torsion-free subgroup $\overline{G}_0 \le \overline{G}$ so that $\leftQ{\overline{X}}{\overline{G}_0}$ is a special cube complex.
\end{proposition}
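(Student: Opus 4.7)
The plan is to split the proposition into three parts, each of which reduces essentially to a result already in hand. Since $\pi$ is peripherally finite and each $P\in\mc{P}$ is residually finite, we may arrange (by lengthening $\pi$ if necessary, which is consistent with all the conditions imposed in Assumption~\ref{as:CCR}) that each $P/N_P$ is finite, hence trivially hyperbolic and virtually special; Corollary~\ref{cor:fill to get VS hyp} then yields immediately that $\overline{G}$ is both hyperbolic and virtually special. Residual finiteness and virtual torsion-freeness follow from this: a virtually special group contains a finite-index subgroup which embeds in a right-angled Artin group, and such groups are linear over $\mathbb{Z}$ (hence residually finite) and torsion-free.

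For the main assertion, I would apply Agol's Theorem \cite{AgolVirtualHaken} to the $\overline{G}$--action on $\overline{X}$, which requires verifying that this action is proper and cocompact. Cocompactness is immediate, since $\leftQ{\overline{X}}{\overline{G}}$ and $X = \leftQ{\widetilde{X}}{G}$ have the same underlying space. For properness, fix a cell $\overline{c}\subseteq\overline{X}$ with lift $\tilde{c}\subseteq\widetilde{X}$; then $\Stab_{\overline{G}}(\overline{c}) = \pi(\Stab_G(\tilde{c}))$. If $\Stab_G(\tilde{c})$ is finite, then by Item~\ref{item:no finite stabilizers} of Proposition~\ref{prop:lots of filling properties}, $\pi$ is injective on $\Stab_G(\tilde{c})$, so its image is finite. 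If $\Stab_G(\tilde{c})$ is infinite, then it is commensurable with a conjugate of some $P\in\mc{P}$, whose image under $\pi$ is the finite group $P/N_P$; thus $\Stab_{\overline{G}}(\overline{c})$ is again finite.

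Once properness is established, Agol's Theorem furnishes a finite-index subgroup $\overline{G}_0\le\overline{G}$ acting freely on $\overline{X}$ with $\leftQ{\overline{X}}{\overline{G}_0}$ a special cube complex. Since every torsion element of a group acting on a CAT$(0)$ space has a non-empty fixed point set, freeness of the action on $\overline{X}$ forces $\overline{G}_0$ to be torsion-free, giving the final claim of the proposition. The one step requiring genuine care is the properness verification --- i.e., tracking how cell stabilizers behave under the Dehn filling --- but this is straightforward given Proposition~\ref{prop:lots of filling properties}; all the remaining ingredients are essentially direct appeals to Agol's Theorem and Corollary~\ref{cor:fill to get VS hyp}.
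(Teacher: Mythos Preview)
Your proof is correct and follows essentially the same route as the paper's: establish that $\overline{G}$ is hyperbolic with a proper cocompact action on $\overline{X}$, then invoke Agol's Theorem. Your detour through Corollary~\ref{cor:fill to get VS hyp} is harmless but redundant (Agol's Theorem applied directly to the $\overline{G}$--action on $\overline{X}$ already yields virtual specialness), and your ``lengthening $\pi$'' remark is unnecessary since Assumption~\ref{as:CCR} already stipulates the filling is peripherally finite, so each $P/N_P$ is finite.
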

\begin{proof}
Since the filling $G \to \overline{G}$ is peripherally finite, $\overline{G}$ is hyperbolic relative to finite groups, and hence is hyperbolic.  Moreover, $\overline{G}$ acts cocompactly on the \CAT$(0)$ cube complex $\overline{X}$ (since $\leftQ{\overline{X}}{\overline{G}}$ and $\leftQ{\widetilde{X}}{G}$ have the same underlying space).  Because the $G$--action on $\widetilde{X}$ is relatively geometric, it follows that stabilizers for the $\overline{G}$--action on $\overline{X}$ are finite.  Thus, the hyperbolic group $\overline{G}$ acts properly and cocompactly on the \CAT$(0)$ cube complex $\overline{X}$.  By Agol's Theorem, $\overline{G}$ is virtually special, and hence residually finite.
It is well known that any residually finite hyperbolic group is virtually torsion-free, so there is a torsion-free finite-index subgroup $\overline{G}_0 \le \overline{G}$ so that $\leftQ{\overline{X}}{\overline{G}_0}$ is a special cube complex, as required.
\end{proof}

Let $B = \leftQ{\overline{X}}{\overline{G}_0}$.  Since $\overline{G}_0$ is torsion-free, $\overline{G}_0 = \pi_1(B)$.
Define $\overline{H}_0 = \overline{H} \cap \overline{G}_0$, and note that $\overline{H}_0$ is torsion-free, so acts freely on $\overline{Y}$.  Let $A = \leftQ{\overline{Y}}{\overline{H}_0}$.  The convex embedding $\overline{Y} \to \overline{X}$ from the conclusion of Theorem~\ref{completion loc iso} descends to a locally convex immersion $f \co A \to B$.  Since $B$ is special, Theorem~\ref{t:CCR} applies to the map $f \co A \to B$, and we obtain the canonical completion $\boldsymbol{C}(A,B)$, and its subdivided version $\boldsymbol{C}_{\boxminus}(A,B)$ as in Theorem~\ref{t:CCR}.
Let $\overline{G}_1$ be the finite-index subgroup of $\overline{G}_0$ corresponding to the finite cover $\boldsymbol{C}(A,B) \to B$. 
By the construction of the canonical completion $\overline{H}_0 \le \overline{G}_1$.

Let $C = \boldsymbol{C}_{\boxminus}(A,B)$, so $\overline{G}_1 = \pi_1(C)$ (recall $\boldsymbol{C}_{\boxminus}(A,B)$ is homeomorphic to $\boldsymbol{C}(A,B)$). Let $\widetilde{C}$ be the \CAT$(0)$ universal cover of $C$.  Since $A \to C$  is an inclusion as a sub-complex, $\widetilde{Y}$ is a convex sub-complex of $\widetilde{C}$. 

Let $\widetilde{C}$ be the induced sub-divided version of $\widetilde{X}$, and let $G_1 = \pi^{-1}\left(\overline{G}_1\right)$, a finite-index subgroup of $G_0$, and $H_0 = H \cap G_1$, a finite-index subgroup of $H$ so $\pi(H_0) = \overline{H}_0$. The universal cover $\overline{C}$ of $C$ is a \CAT$(0)$ cube complex which is a sub-divided version of $\overline{X}$.

There is a $G_1$--action on $\widetilde{C}$ and an $H_0$--action on $\widetilde{Y}$, so that the underlying space of $\leftQ{\widetilde{C}}{G_1}$ is $C$ and the underlying space of $\leftQ{\widetilde{Y}}{H_0}$ is $A$.  Observe the $G_1$--action on $\widetilde{C}$ is relatively geometric, with respect to the induced peripheral structure $(G_1,\mc{P}_1)$ on $G_1$.

\begin{proposition}\label{prop:magic stabilizers}
The following properties for the $G_1$--action on $\widetilde{C}$ and the $H_0$--action on $\widetilde{Y}$ hold:
\begin{enumerate}
\item\label{eq:triv or max para} Stabilizers in $G_1$ of cells in $\widetilde{C}$ are either trivial or else maximal parabolic subgroups of $G_1$.
\item\label{eq:C(P) embeds} If $P_1 \in \mc{P}_1$ then the sub-complex $C(P_1)$ from Definition~\ref{def:para stab} has cells whose stabilizers are exactly $P_1$, and $C(P_1)$ embeds in $C$ under the quotient map.
\item\label{eq:equal stabs} If $\sigma \in \widetilde{Y}$ is a cell with nontrivial $H_0$--stabilizer then $\Stab_{H_0}(\sigma) = \Stab_{G_1}(\sigma)$.
\end{enumerate}
\end{proposition}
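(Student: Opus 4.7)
The plan is to push every $G_1$-stabilizer into the kernel $K$ by exploiting torsion-freeness of $\overline{G}_1$, and then to evaluate $K\cap\Stab_G(\sigma)$ for cubes $\sigma$ of $\widetilde{X}$ using the filling properties of Proposition~\ref{prop:lots of filling properties} together with the $\mc{Q}$- and $H$-filling hypotheses.  The first step is a reduction: because $\overline{G}_0$ is torsion-free and every cell of $\overline{X}$ has finite $\overline{G}$-stabilizer (the filling is peripherally finite), $\overline{G}_0$ acts freely on $\overline{X}$, and hence $\overline{G}_1$ acts freely on its $\overline{G}_1$-equivariant subdivision $\overline{C}$.  Lifting, for any cell $\tau$ of $\widetilde{C}$ the image of $\Stab_{G_1}(\tau)$ in $\overline{G}_1$ is trivial, so $\Stab_{G_1}(\tau)=K\cap\Stab_G(\tau)$.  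Assuming (as we may, by a cubical subdivision if necessary) that $G$ acts on $\widetilde{X}$ without inversions, the stabilizer of each cell of $\widetilde{C}$ coincides with the $G$-stabilizer of the unique minimal cube $\sigma$ of $\widetilde{X}$ containing it, so the computation reduces to evaluating $K\cap\Stab_G(\sigma)$ for cubes $\sigma$ of $\widetilde{X}$.

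For part (\ref{eq:triv or max para}), I would split into cases.  If $\Stab_G(\sigma)$ is finite, Item~\ref{item:no finite stabilizers} of Proposition~\ref{prop:lots of filling properties} (enlarged slightly to cover the finitely many additional finite stabilizers introduced by the subdivision) combined with a choice of torsion-free filling kernels $N_P$ (possible since each $P\in\mc{P}$ is residually finite) ensures that $K$ contains no torsion, so $K\cap\Stab_G(\sigma)=1$.  If $\Stab_G(\sigma)$ is infinite, almost malnormality of $\mc{P}$ places it as a finite-index subgroup of a unique maximal parabolic $P^g$.  The $\mc{Q}$-filling hypothesis (together with almost malnormality to identify the correct conjugate) forces $N_P^g\le\Stab_G(\sigma)$, while Item~\ref{item:RHDF} gives $K\cap P^g=N_P^g$; intersecting yields $\Stab_{G_1}(\tau)=N_P^g$.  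Finally, $N_P^g$ is a maximal parabolic of $G_1$ since $G_1\cap P^g=N_P^g$: the image $\overline{G}_1\cap\overline{P}^{\pi(g)}$ is a finite subgroup of the torsion-free group $\overline{G}_1$, hence trivial.

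Part (\ref{eq:C(P) embeds}) then follows from almost malnormality of $\mc{P}_1$: any two maximal parabolics are either equal or intersect in a finite group, so the ``$\sim P_1$'' condition defining $C(P_1)$ collapses to equality with $P_1$ in view of (\ref{eq:triv or max para}); moreover $N_{G_1}(P_1)=P_1$, so any $g\in G_1$ identifying two cells $\tau_1,\tau_2\in C(P_1)$ lies in $\Stab_{G_1}(\tau_1)=P_1$, forcing $\tau_1=\tau_2$.  For part (\ref{eq:equal stabs}), take $\sigma\in\widetilde{Y}$ with $\Stab_{H_0}(\sigma)\neq 1$; the inclusions $\Stab_{H_0}(\sigma)\le\Stab_{G_1}(\sigma)\le K$ together with torsion-freeness of $K$ force $\Stab_{H_0}(\sigma)$ (and hence $\Stab_H(\sigma)$) to be infinite, so $|H\cap P^g|=\infty$ where $\Stab_G(\sigma)\le P^g$.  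The $H$-filling property then delivers $N_P^g\le H$, so $\Stab_{G_1}(\sigma)=N_P^g\le H_0$, upgrading the trivial containment $\Stab_{H_0}(\sigma)\le\Stab_{G_1}(\sigma)$ to equality.

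The main technical subtlety is ensuring the Haglund--Wise subdivision interacts well with infinite stabilizers: cubes of $\widetilde{X}$ outside $\widetilde{Y}$ may still have infinite $G$-stabilizer (through $G$-translates of cubes in $C(P)$ that do not land in the $H$-invariant $\widetilde{Y}$), and the no-inversions hypothesis is what guarantees every such full stabilizer fixes every sub-cube of the subdivision, keeping the classification in (\ref{eq:triv or max para}) clean.  A secondary point is that one must choose a single filling simultaneously achieving the conclusions of Proposition~\ref{prop:lots of filling properties}, torsion-freeness of $K$ (achievable by taking $N_P$ torsion-free and the filling long enough), and avoidance of the finitely many additional finite stabilizers produced by the subdivision; each of these is a cofinal condition on the filling, so a common sufficiently long filling exists.
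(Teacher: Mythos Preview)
Your overall strategy matches the paper's: use torsion-freeness of $\overline{G}_1$ to reduce $\Stab_{G_1}(\tau)$ to $K\cap\Stab_G(\sigma)$, then evaluate this intersection via the $\mc{Q}$-filling hypothesis together with Items~\ref{item:RHDF} and~\ref{item:no finite stabilizers} of Proposition~\ref{prop:lots of filling properties}.  Your treatment of part~\eqref{eq:C(P) embeds} via self-normalization of maximal parabolics is a bit more explicit than the paper's one-line appeal to malnormality, but equivalent.

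There is, however, a genuine error.  You claim torsion-free filling kernels $N_P$ may be chosen ``since each $P\in\mc{P}$ is residually finite'', and you use the resulting torsion-freeness of $K$ in part~\eqref{eq:equal stabs} to force $\Stab_{H_0}(\sigma)$ infinite.  But residual finiteness does not imply virtual torsion-freeness: the lamplighter group $(\mathbb{Z}/2)\wr\mathbb{Z}$ is finitely generated and residually finite, yet every finite-index subgroup meets $\bigoplus\mathbb{Z}/2$ in an infinite $2$-group.  So under Assumption~\ref{as:CCR} alone you cannot arrange $K$ torsion-free (and even with torsion-free $N_P$ you would still need a nontrivial structural fact about $K$ to conclude).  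The fix is simply to drop this hypothesis, since it is not needed.  For the finite-stabilizer case in~\eqref{eq:triv or max para}, Item~\ref{item:no finite stabilizers} already gives $K\cap\Stab_G(\sigma)=\{1\}$ directly, and under your no-inversions assumption the subdivision introduces no new stabilizers, so the ``enlargement'' you worry about is vacuous.  For~\eqref{eq:equal stabs}, argue as the paper does: if $\Stab_H(\sigma)$ were a finite cell stabilizer for the relatively geometric $H$-action on $\widetilde{Y}$, then Item~\ref{item:no finite stabilizers} (applied to that action) makes $\pi$ injective on it, whence $\pi(\Stab_{H_0}(\sigma))$ is a nontrivial finite subgroup of the torsion-free group $\overline{H}_0$---a contradiction.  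Hence $\Stab_H(\sigma)$ is infinite, so $|H\cap P^g|=\infty$, and your $H$-filling argument finishes exactly as written.
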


\begin{proof}
Stabilizers are already finite or finite-index in a maximal parabolic because the action is relatively geometric. 
Since $\bar{G}_1$ is torsion free and $\pi \co G \to \overline{G}$ is injective on finite stabilizers by Proposition~\ref{prop:lots of filling properties}.\eqref{item:no finite stabilizers}, $G_1$ has no nontrivial finite cell stabilizers.
The filling $\pi:G \to \overline{G}$ is a $\mc{Q}$--filling and $\pi$ is injective on the finite groups $\left\{ P/N_P \right\}$, so because $\bar{G}_1$ is torsion free any infinite stabilizer must be maximal parabolic in $G_1$.  This proves \eqref{eq:triv or max para}.

Item~\eqref{eq:C(P) embeds} follows because all non-trivial stabilizers are maximal parabolic in $G_1$ and maximal parabolic subgroups are malnormal. 

For Item~\eqref{eq:equal stabs}, suppose $\sigma \in \widetilde{Y}$ has $\Stab_{H_0}(\sigma) \ne \{ 1 \}$.  By Proposition~\ref{prop:lots of filling properties}.\eqref{item:no finite stabilizers} the map $H \to \overline{H}$ is injective on finite cell stabilizers.  Moreover, $\overline{H}_0$ is torsion-free, and $\overline{H}_0$ is the induced filling of $H_0$.  Therefore, $\Stab_{H_0}(\sigma)$ is infinite, and hence full parabolic.  
Let $Q$ be the maximal parabolic subgroup of $G_1$ containing $\Stab_{H_0}(\sigma)$. Then $Q = \Stab_{G_1}(\sigma)$ by Item~\eqref{eq:triv or max para}. 
Since $\overline{G}_1$ is torsion-free and $\pi$ is an $H$-filling, $\Stab_{H_0}(\sigma) = Q = \Stab_{G_1}(\sigma)$, as required. 
\end{proof}

The actions of $G_1$ on $\widetilde{C}$ and $H_0$ on $\widetilde{Y}$ give rise to a pair of complexes of groups $G(\mc{C})$ and $H(\mc{A})$, with underlying scwols $\mc{C}$ arising from $C$ and $\mc{A}$ arising from $A$, respectively.  The map $i \co A \to C$ gives rise to a (non-degenerate) morphism of scwols $f_i \co \mc{A} \to \mc{C}$ as in \cite[III.$\mc{C}$.1.5]{BridsonHaefliger}.    A complex of groups comes with a collection of data, one part of which is ``twisting elements" (see \cite[Definition III.$\mc{C}$.2.1]{BridsonHaefliger}.  A complex of groups is \emph{simple} if all the twisting elements are trivial (see \cite[III.$\mc{C}$.2.1]{BridsonHaefliger}).  To build the complexes of groups $G(\mc{C})$ and $H(\mc{A})$, follow the construction from \cite[$\S$ III.$\mc{C}$.2.9]{BridsonHaefliger}.  This construction involves some choices (of the lifts of objects, and of the elements $h_a$).  However, we make the following observation.

\begin{lemma}
We may make choices in the constructions of $G(\mc{C})$ and $H(\mc{A})$ so that both are simple complexes of groups.
\end{lemma}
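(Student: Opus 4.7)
The plan is to choose lifts of cells and elements $h_a$ in the construction of \cite[III.$\mc{C}$.2.9]{BridsonHaefliger} so that all twisting elements vanish. Equivalently, I aim to exhibit a \emph{strict fundamental domain} $F_{\widetilde{C}} \subseteq \widetilde{C}$ for the $G_1$-action and a strict fundamental domain $F_{\widetilde{Y}} \subseteq \widetilde{Y}$ for the $H_0$-action, meaning subcomplexes meeting each orbit of cells in exactly one cell, with $F_{\widetilde{Y}} \subseteq F_{\widetilde{C}}$. Choosing each $\widetilde{\sigma}$ in its fundamental domain and each $h_a = 1$ forces $g_{a,b} = 1$ in the cocycle equation $\psi_a \psi_b = \operatorname{Ad}(g_{a,b}) \psi_{ab}$, yielding two simple complexes of groups.

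The first step is to show that the $G_1$-action on $\widetilde{C}$ and the $H_0$-action on $\widetilde{Y}$ are \emph{without inversions}: any group element stabilizing a cube $\sigma$ setwise fixes $\sigma$ pointwise. For $G_1$, if $g$ stabilizes $\sigma$ then $g \in \Stab_{G_1}(\sigma)$, which by Proposition~\ref{prop:magic stabilizers}.\eqref{eq:triv or max para} is either trivial or a maximal parabolic $P$. In the latter case, for each vertex $v$ of $\sigma$ the inclusion $P \subseteq \Stab_{G_1}(v)$ and the fact that $\Stab_{G_1}(v)$ is trivial or maximal parabolic force $\Stab_{G_1}(v) = P$, so $g$ fixes $v$. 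Hence $g$ fixes the vertex set of $\sigma$, and therefore $\sigma$ pointwise. The same argument, together with Proposition~\ref{prop:magic stabilizers}.\eqref{eq:equal stabs}, gives the analogous statement for the $H_0$-action on $\widetilde{Y}$.

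Next, I construct $F_{\widetilde{C}}$: for each $G_1$-conjugacy class of maximal parabolics, choose a representative $P$ and include $C(P)$ in $F_{\widetilde{C}}$; Proposition~\ref{prop:magic stabilizers}.\eqref{eq:C(P) embeds} ensures no redundant cells. I then extend $F_{\widetilde{C}}$ across cells of $\mc{C}$ with trivial stabilizer by a dimension induction, adding one $G_1$-orbit representative at a time. Consistency at a trivial-stabilizer cell with several parabolic-stabilizer faces is forced by Proposition~\ref{prop:magic stabilizers}: if two such faces have distinct maximal parabolic stabilizers $P_1, P_2$, then $P_1 \cap P_2$ is trivial by almost malnormality, so the relevant lifts are uniquely determined; if $P_1 = P_2$, both faces lie in the same $C(P)$ by the convexity of $C(P)$ from Proposition~\ref{p:C compact}. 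For $F_{\widetilde{Y}}$ I carry out the analogous construction on $\widetilde{Y}$ first, using Proposition~\ref{prop:magic stabilizers}.\eqref{eq:equal stabs} to match the infinite stabilizers exactly, and then extend the chosen lifts to $F_{\widetilde{C}}$ so that $F_{\widetilde{Y}} \subseteq F_{\widetilde{C}}$; the freedom to do so comes from performing the $G_1$-extension only over cells not already covered by the $H_0$-extension.

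The main obstacle is the dimension induction extending $F_{\widetilde{C}}$ past $\bigcup_P C(P)$ over cells with trivial stabilizer. The key input making it succeed is that $\overline{G}_1$ is torsion-free, so the quotient $C$ is a genuine cube complex with no cell self-identifications, and consequently the local compatibility forced by the parabolic subcomplexes $C(P)$ propagates globally one $G_1$-coset of lifts at a time.
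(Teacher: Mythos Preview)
Your approach via a strict fundamental domain has a genuine gap in the inductive extension step. Consider a cube $\sigma$ in $\widetilde{C}$ with trivial $G_1$--stabilizer which has two faces $\tau_1, \tau_2$ (for instance two non-adjacent vertices) whose stabilizers are \emph{distinct but conjugate} maximal parabolics $P$ and $gPg^{-1}$. Nothing in the setup rules this out. Having fixed $P$ as the representative of this conjugacy class, you have placed $C(P)$ inside $F_{\widetilde{C}}$, so the designated lifts $\widetilde{\tau_1}, \widetilde{\tau_2}$ both lie in $C(P)$ and hence both have stabilizer exactly $P$. But for any single lift $\widetilde{\sigma}$ of $\sigma$, the corresponding faces have stabilizers $hPh^{-1}$ and $hgP(hg)^{-1}$ for some $h \in G_1$, and by malnormality of maximal parabolics these cannot both equal $P$ unless $g \in P$. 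Thus no lift of $\sigma$ has all of its faces already in $F_{\widetilde{C}}$, and $F_{\widetilde{C}}$ cannot be extended to a \emph{subcomplex} meeting each orbit once. Your consistency clause (``$P_1 \cap P_2$ is trivial, so the relevant lifts are uniquely determined'') does not address this: triviality of $P_1 \cap P_2$ says nothing about the existence of a compatible lift of $\sigma$.

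The paper sidesteps this by never asserting a strict fundamental domain. It observes that each twisting element $g_{a,b}$ lives in $G_{t(a)}$, which is either trivial (so $g_{a,b}=1$ automatically) or a single maximal parabolic $P$; in the latter case all cells in the chain $i(b) > i(a) > t(a)$ with nontrivial stabilizer already lie in $C(P)$, and Proposition~\ref{prop:magic stabilizers}.\eqref{eq:C(P) embeds} lets one lift $C(P)$ coherently, which is enough to make the relevant $g_{a,b}$ vanish. This is strictly weaker than demanding $h_a = 1$ for every arrow. To salvage your argument you would need to drop the requirement that $F_{\widetilde{C}}$ be a subcomplex and instead choose lifts that are only coherent on each $C(P)$ separately, at which point you are essentially carrying out the paper's argument.
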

\begin{proof}
Suppose $\sigma$ is an object in $\mc{C}$ whose stabilizer is nontrivial, then we lift to (the scwol $\mc{X}$ associated to) $X$ and obtain a nontrivial cell stabilizer in $G_1$.  By Proposition~\ref{prop:magic stabilizers}.\eqref{eq:C(P) embeds} the set of objects in $\mc{C}$ whose stabilizers intersect $\Stab(\sigma)$ nontrivially can be simultaneously lifted to $\mc{X}$ when defining $G(\mc{C})$, so for these cells the twisting elements can be chosen to be trivial.  For other cells, the stabilizer is trivial, so twisting elements are trivial.  The proof for $H(\mc{A})$ is the same.
\end{proof}

By \cite[Corollary III.$\mc{C}$.2.18]{BridsonHaefliger}, the $H_0$--equivariant inclusion $\widetilde{Y} \to \widetilde{X}$ induces a morphism of complexes of groups $\phi \co H(\mc{A}) \to G(\mc{C})$ over $f_i$.  We can, and do, consider the morphism $f_i$ to be inclusion, so that objects and arrows of $\mc{A}$ are contained in $\mc{C}$.  By choosing lifts of $\mc{A}$ before lifts of $\mc{C}$ when defining the complex of groups structures, we can ensure that if $\sigma$ is an object of $\mc{A}$, and $H_\sigma$ is nontrivial, then $H_\sigma = G_\sigma$, and the map $\phi_\sigma \co H_\sigma \to G_\sigma$ is the identity.

Now, let $r \co C \to A$ be the canonical retraction from Theorem~\ref{t:CCR}, and let $f_r \co \mc{C} \to \mc{A}$ be the associated morphism of scwols (since cells in $C$ may be collapsed under $r$, the morphism $f_r$ is probably degenerate).  

We now define a morphism of complexes of groups $\theta \co G(\mc{C}) \to H(\mc{A})$ over $f_r$.
Let $\sigma$ be an object of $\mc{C}$, and consider the object $f_r(\sigma) \in \mc{A}$.  

\begin{lemma} \label{lem:same or disj}
Either $H_{f_r(\sigma)} = G_\sigma$ or else $H_{f_r(\sigma)} \cap G_\sigma = \{ 1 \}$.
\end{lemma}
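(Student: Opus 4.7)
The plan is to reduce the claim to the fact that two distinct maximal parabolic subgroups of $G_1$ intersect trivially.  If $G_\sigma = \{1\}$ or $H_{f_r(\sigma)} = \{1\}$, the intersection $H_{f_r(\sigma)} \cap G_\sigma$ is trivial and there is nothing to prove, so assume both are nontrivial.

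First I would identify both $G_\sigma$ and $H_{f_r(\sigma)}$ as maximal parabolic subgroups of $G_1$.  Proposition~\ref{prop:magic stabilizers}\eqref{eq:triv or max para} immediately gives this for $G_\sigma$.  For $H_{f_r(\sigma)}$, lift $f_r(\sigma)$ to $\tilde\tau \in \widetilde{Y}$ so that $H_{f_r(\sigma)} = \Stab_{H_0}(\tilde\tau)$; since this stabilizer is nontrivial, Proposition~\ref{prop:magic stabilizers}\eqref{eq:equal stabs} upgrades it to $\Stab_{G_1}(\tilde\tau)$, which is a maximal parabolic of $G_1$ by Proposition~\ref{prop:magic stabilizers}\eqref{eq:triv or max para}.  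Write $Q = G_\sigma$ and $R = H_{f_r(\sigma)}$, both viewed as subgroups of $G_1$.

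The main step is then to show that any two distinct maximal parabolic subgroups of $G_1$ have trivial intersection.  Each of $Q$ and $R$ is an infinite parabolic subgroup of $(G,\mc{P})$, and by almost malnormality of parabolics in $G$ each sits inside a unique maximal parabolic subgroup $\tilde Q$, respectively $\tilde R$, of $G$.  Because $Q$ is the maximal infinite parabolic subgroup of $G_1$ contained in $\tilde Q$, we have $Q = G_1 \cap \tilde Q$; similarly $R = G_1 \cap \tilde R$.  If $\tilde Q = \tilde R$, this forces $Q = R$ and the lemma is immediate.  Otherwise $\tilde Q \ne \tilde R$.  Because the filling $\pi\colon G \to \overline{G}$ is peripherally finite, $\pi(\tilde Q)$ is finite; but $\pi(Q) \le \overline{G}_1 \cap \pi(\tilde Q)$ and $\overline{G}_1$ is torsion-free, so $\pi(Q) = \{1\}$ and $Q \le K$.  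By symmetry $R \le K$, hence $Q \cap R \le K \cap \tilde Q \cap \tilde R = \{1\}$ by Proposition~\ref{prop:lots of filling properties}\eqref{item:inj_on_int_para}.

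I do not expect a serious obstacle here; the argument is essentially bookkeeping once the choice of $\mc{Q}$--filling and the torsion-free lift $\overline{G}_1$ are in place.  The most delicate point is the implicit identification of $H_{f_r(\sigma)}$ with a subgroup of $G_1$, which works precisely because the $H_0$-- and $G_1$--stabilizers of $\tilde\tau$ coincide by Proposition~\ref{prop:magic stabilizers}\eqref{eq:equal stabs}.
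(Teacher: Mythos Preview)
Your proposal is correct and follows essentially the same route as the paper: both identify $G_\sigma$ and $H_{f_r(\sigma)}$ (when nontrivial) as maximal parabolic subgroups of $G_1$ via Proposition~\ref{prop:magic stabilizers}, and then deduce that distinct maximal parabolics of $G_1$ intersect trivially using Proposition~\ref{prop:lots of filling properties}\eqref{item:inj_on_int_para} together with the fact that $\overline{G}_1$ is torsion-free.  The only cosmetic difference is that the paper argues directly that $Q \cap R \le K \cap \tilde Q \cap \tilde R$, whereas you establish the stronger (but equally easy) fact $Q \le K$ first; both reach the same conclusion by the same mechanism.
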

\begin{proof}
The subgroup $H_{f_r(\sigma)}$ is a cell stabilizer for the action of $H_0$ on $\tilde{Y}$, so if $H_{f_r(\sigma)} \ne \{ 1 \}$ then by Proposition~\ref{prop:magic stabilizers}.\eqref{eq:equal stabs} $H_{f_r(\sigma)}$ is a cell stabilizer for the action of $G_1$ on $\tilde{C}$. 
Cell stabilizers for the action of $G_1$ on $\tilde{C}$ are either trivial or maximal parabolic. Therefore, either $G_\sigma = H_{f_r(\sigma)}$ or they are distinct cell stabilizers in $G_1$ and have trivial intersection by Proposition~\ref{prop:lots of filling properties}.\eqref{item:inj_on_int_para} and the construction of $G_1$.
\end{proof}

In case $H_{f_r(\sigma)} = G_\sigma$ define $\theta_\sigma$ to be the identity map, and in case $H_{f_r(\sigma)} \cap G_\sigma = \{ 1 \}$ define $\theta_\sigma$ to be the trivial map.
For each arrow $a \in \mc{C}$, define the element $\theta(a)$ to be the identity element of $H_{t(f_r(a))}$.

This data defines the structure of a morphism of complexes of groups $\theta$, as follows from the next result (where $\psi_{f_r(a)}$ and $\psi_a$ are the homomorphisms in the complexes of groups $H(\mc{A})$ and $G(\mc{C})$ respectively).

\begin{lemma}
For each arrow $a$ of $\mc{C}$ we have 
\[	\psi_{f_r(a)} \theta_{i(a)} = \theta_{t(a)} \psi_{a}	.	\]
\end{lemma}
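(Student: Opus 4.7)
The strategy is a case analysis based on the dichotomy in Lemma~\ref{lem:same or disj}: each of $\theta_{i(a)}$ and $\theta_{t(a)}$ is either the identity map or the zero map. Both sides of the claimed identity are homomorphisms $G_{i(a)} \to H_{t(f_r(a))}$, so I would verify agreement on $G_{i(a)}$.

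If $G_{i(a)} = \{1\}$ the identity holds trivially. Otherwise, by Proposition~\ref{prop:magic stabilizers}.\eqref{eq:triv or max para}, $G_{i(a)}$ is a maximal parabolic subgroup of $G_1$. Since $\psi_a$ is a monomorphism, $G_{t(a)}$ contains a nontrivial subgroup of this maximal parabolic; almost malnormality of parabolic subgroups in $G_1$ then forces $G_{t(a)} = G_{i(a)}$ (call the common parabolic $P$) and $\psi_a$ to be the identity on $P$. The same dichotomy applied to $\psi_{f_r(a)}$ shows that, whenever $H_{f_r(i(a))}$ is nontrivial, $H_{f_r(i(a))} = H_{f_r(t(a))}$ and $\psi_{f_r(a)}$ is the identity on their common group.

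The two unmixed cases for $(\theta_{i(a)},\theta_{t(a)})$ (both identity or both zero) make the equation immediate. The bulk of the work is to rule out the two mixed cases. If $\theta_{i(a)}$ is identity so that $H_{f_r(i(a))} = P$, but $\theta_{t(a)}$ is trivial, then $\psi_{f_r(a)}$ injects $P$ into $H_{f_r(t(a))}$, so by the parabolic argument $H_{f_r(t(a))} = P = G_{t(a)}$, contradicting $H_{f_r(t(a))} \cap G_{t(a)} = \{1\}$. For the symmetric mixed case, $\theta_{t(a)}$ identity gives $P = G_{t(a)} = H_{f_r(t(a))} \subseteq H_0$. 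I would invoke the $G_1$-equivariance of the universal cover $\tilde{r} \co \tilde{C} \to \tilde{Y}$ of the retraction $r \co C \to A$ with respect to the induced retraction $\rho \co G_1 \to H_0$ (which is the identity on $H_0$): then $P = \rho(P)$ fixes $\tilde{r}(\widetilde{i(a)})$, so arranging lifts so that $\widetilde{f_r(i(a))} = \tilde{r}(\widetilde{i(a)})$ yields $P \subseteq H_{f_r(i(a))}$, contradicting $H_{f_r(i(a))} \cap G_{i(a)} = \{1\}$.

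The main obstacle is coordinating the lift choices in the constructions of $G(\mc{C})$ and $H(\mc{A})$ so that simultaneously $\widetilde{f_r(\sigma)} = \tilde{r}(\widetilde{\sigma})$ for every object $\sigma$ of $\mc{C}$ (needed for the retraction argument), and $H_\sigma = G_\sigma$ for each object $\sigma$ of $\mc{A}$ (used earlier to define the morphism $\phi$). I would expect to extend the inductive lift-choice procedure of \cite[$\S$III.$\mc{C}$.2.9]{BridsonHaefliger} to handle both morphisms simultaneously, after which the compatibility equation follows from the analysis above.
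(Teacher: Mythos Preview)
Your case split and your handling of the first mixed case match the paper's argument. The gap is in the second mixed case, where you assume $\theta_{t(a)}$ is the identity (so $P := G_{i(a)} = G_{t(a)} = H_{f_r(t(a))} \le H_0$) and try to force $\theta_{i(a)}$ to be the identity as well.

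Your proposed mechanism is to use a $G_1$--equivariant lift $\tilde{r}\co \tilde{C}\to\tilde{Y}$ of $r$ with respect to a group retraction $\rho\co G_1\to H_0$. Two problems. First, $\rho$ is exactly what this lemma is building: in the paper $\rho$ is defined as $\pi_1(\theta)$ \emph{after} $\theta$ is shown to be a morphism, so invoking it here is circular. The only retraction available a priori is the Haglund--Wise one $\overline{\rho}\co\overline{G}_1\to\overline{H}_0$ at the torsion-free level, and there is no evident lift of that to $G_1\to H_0$ before the lemma is proved. Second, $\tilde{C}$ is not the universal cover of $C$: it is the subdivided $\tilde{X}$, on which $G_1$ acts with nontrivial (parabolic) stabilizers, while the covering-space lift of $r$ lives on $\overline{C}\to\overline{Y}$ and is only $\overline{G}_1$--equivariant. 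The ``coordinate the lifts'' obstacle you flag is real, and your sketch does not resolve it.

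The paper sidesteps all of this with a direct geometric observation you have not used: by Assumption~\ref{as:CCR} the core $\widetilde{Y}$ was chosen to contain $c_D\cdot C(P_D)$ for every $D\in\mc{D}$. Once $P = G_{i(a)} \le H_0$, the chosen lift $\widetilde{i(a)}$ lies in $C(P)\subseteq\widetilde{Y}$, so the object $i(a)$ already belongs to the image of $\mc{A}$ in $\mc{C}$ and is therefore fixed by $f_r$. Hence $f_r(i(a)) = i(a)$, the compatible lift choices give $H_{f_r(i(a))} = G_{i(a)}$, and $\theta_{i(a)}$ is the identity. This replaces your appeal to an equivariant $\tilde r$ with a one-line consequence of how $\widetilde{Y}$ was constructed.
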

\begin{proof}
Suppose first that $G_{t(a)} \ne H_{t(f_r(a))}$. Then $G_{t(a)}\cap H_{t(f_r(a))} = \{1\}$ by Lemma~\ref{lem:same or disj}. If $\theta_{i(a)}$ is non-trivial, then $G_{i(a)} = H_{i(f_r(a))}$. 
The local maps $\psi_a,\psi_{f_r(a)}$ are inclusions, so $ G_{t(a)}\cap H_{t(f_r(a))}$ is non-trivial, a contradiction. Therefore, $\theta_{i(a)}$ must be trivial, and the lemma holds in this case.

On the other hand, suppose that $G_{t(a)} = H_{t(f_r(a))}$. Then $\theta_{t(a)}$ is the identity map.
Non-trivial cell stabilizers in $G_1$ are maximal parabolic by Proposition~\ref{prop:magic stabilizers}.\eqref{eq:triv or max para} and intersections of maximal parabolics are trivial by Proposition~\ref{prop:lots of filling properties}.\eqref{item:inj_on_int_para}. Therefore, $G_{i(a)}$ is trivial or $G_{i(a)} = G_{t(a)}$.  
In the first case, $\theta_{i(a)}$ is the trivial map and the lemma follows, so suppose $G_{i(a)} = G_{t(a)}$, a maximal parabolic subgroup. 
 Note that $\theta_{i(a)}\psi_a(G_{i(a)}) = G_{i(a)}$.  Also, we have (in this case) $G_{i(a)} = G_{t(a)} = H_{t(f_r(a))}$.  Moreover, $H_{t(f_r(a))}$ is some maximal parabolic in $H_0$, which in turn is finite-index in a maximal parabolic in $H$.  Because we chose $\widetilde{Y}$ to contain all of the $c_D \cdot C(P_D)$ in Assumption~\ref{as:CCR}, and because $\widetilde{Y}$ is $H$--invariant, we see that $C(G_{i(a)}) \subseteq \widetilde{Y}$.
 Then $i(a)$ is a cell in the image of $\leftQ{\tilde{Y}}{H_0} \subseteq \leftQ{\tilde{C}}{G_1}$ that is fixed by $f_r$.
Therefore, $\theta_{i(a)}$ is the identity map, and the lemma follows.  
\end{proof}

It follows immediately from the construction that $\theta \circ \phi = \operatorname{Id}_{H(\mc{A})}$.

The induced map on $\pi_1$ from $\phi$ is the inclusion $\iota \co H_0 \to G_1$.  Moreover, if $\rho \co G_1 \to H_0$ is $\rho = \pi_1(\theta)$ then the fact that $\theta \circ \phi = \operatorname{Id}_{H(\mc{A})}$ implies $\rho \circ \iota = \operatorname{Id}_{H_0}$.  This proves Theorem~\ref{t:VVR} from the introduction. 

For future use, we summarize the above construction in the following result (in the statement below, we elide the difference between a quotient space and the induced complex of groups).

\begin{theorem} \label{t:Completion_retraction}
Make Assumption~\ref{as:RG}, and suppose further that elements of $\mc{P}$ are residually finite.  Let $H$ be a full relatively quasiconvex subgroup of $G$. There exist
\begin{itemize}
\item a cocompact convex core $\tilde{Y}\subseteq \tilde{X}$ for $H$;
\item finite index subgroups $H_0\le H$ and $G_1\le G$, so $H_0 \le G_1$;
\item and a subdivision $\tilde{C}$ of $\tilde{X}$ with an embedding $\tilde{Y}\hookrightarrow \tilde{C}$;
\end{itemize}
together with morphisms of complexes of groups:
\[\phi: \leftQ{\tilde{Y}}{H_0} \to \leftQ{\tilde{C}}{G_1} \qquad \text{ and } \qquad \theta: \leftQ{\tilde{C}}{G_1} \to \leftQ{\tilde{Y}}{H_0}\]
so that
\begin{itemize}
\item the underlying map of $\phi$ is an embedding of special cube complexes
\item for each cell $\sigma$ of $\leftQ{\tilde{Y}}{H_0}$, either the local group $H_\sigma$ is trivial or $H_\sigma$ and the local group $G_{\phi(\sigma)}$ are equal and the map $\phi_\sigma:H_\sigma\to G_{\phi(\sigma)}$ is the identity map. 
\item $\theta\circ \phi$ is the identity morphism on $\leftQ{\tilde{Y}}{H_0}$.
\end{itemize}
\end{theorem}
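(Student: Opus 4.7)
The plan is to package together everything that has been constructed in the preceding pages of Section~\ref{s:CCR}; the theorem is a clean summary statement, so the proof is mainly organizational.

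First, I would produce the convex core. Apply Corollary~\ref{L: peripherally full natural} to the full relatively quasi-convex subgroup $H$ with compact seed $K_0 = \emptyset$; the resulting convex $H$--invariant, $H$--cocompact subcomplex $\widetilde{Y}$ contains each $c_D \cdot C(P_D)$ (by including the finitely many sets $C(P)$ in $K_0$), which is exactly the condition required by Assumption~\ref{as:CCR}. Next, produce the Dehn filling. Choose a collection $\mc{Q}$ as in Proposition~\ref{prop:fill CAT(0)}. By Corollary~\ref{cor:fill to get VS hyp} together with Proposition~\ref{prop:lots of filling properties} and Theorem~\ref{completion loc iso}, a sufficiently long peripherally finite $(\mc{Q} \cup \{H\})$--filling $\pi\co G\to\overline{G}$ produces a hyperbolic, virtually special quotient $\overline{G}$ acting on the \CAT$(0)$ cube complex $\overline{X}$, with $\overline{Y}\hookrightarrow\overline{X}$ a convex embedding; this sets up the conditions of Assumption~\ref{as:CCR}.

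Now apply Agol's Theorem to get a torsion-free finite-index $\overline{G}_0\le\overline{G}$ with $B=\leftQ{\overline{X}}{\overline{G}_0}$ special. Let $\overline{H}_0=\overline{H}\cap\overline{G}_0$, $A=\leftQ{\overline{Y}}{\overline{H}_0}$, so that the convex embedding descends to a locally convex combinatorial immersion $f\co A\to B$. Apply Haglund--Wise (Theorem~\ref{t:CCR}) to obtain the finite cover $\boldsymbol{C}(A,B)\to B$ and its subdivided version $\boldsymbol{C}_{\boxminus}(A,B)$ with a retraction onto $A$. Take $\overline{G}_1\le\overline{G}_0$ to be the finite-index subgroup corresponding to $\boldsymbol{C}(A,B)$, and set $G_1=\pi^{-1}(\overline{G}_1)$, $H_0=H\cap G_1$; these are the required finite-index subgroups (note $\overline{H}_0\le\overline{G}_1$ by construction, so $H_0\le G_1$). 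The subdivision $\widetilde{C}$ of $\widetilde{X}$ is the universal cover of the pullback, on which $G_1$ acts with the same relatively geometric features, and $\widetilde{Y}$ embeds as a convex subcomplex (since inclusion $A\hookrightarrow\boldsymbol{C}_{\boxminus}(A,B)$ is an embedding).

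For the morphisms of complexes of groups, use the framework of \cite[III.$\mc{C}$.2.9]{BridsonHaefliger}. The inclusion $\widetilde{Y}\hookrightarrow\widetilde{C}$ is $H_0$--equivariant, so by \cite[Corollary III.$\mc{C}$.2.18]{BridsonHaefliger} it induces a morphism $\phi$; choosing lifts of $\mc{A}$ before lifts of $\mc{C}$ arranges that both complexes of groups are simple and that each local map $\phi_\sigma\co H_\sigma\to G_\sigma$ is the identity whenever $H_\sigma$ is nontrivial. The retraction $r\co C\to A$ defines a (possibly degenerate) scwol morphism $f_r$. Define $\theta$ on objects by sending $\sigma$ to $f_r(\sigma)$, with $\theta_\sigma$ the identity if $G_\sigma=H_{f_r(\sigma)}$ and the trivial map otherwise; Lemma~\ref{lem:same or disj} (which uses Proposition~\ref{prop:magic stabilizers} and Proposition~\ref{prop:lots of filling properties}.\eqref{item:inj_on_int_para}) shows these are the only two cases. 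Assign trivial twisting elements to arrows. Verify the compatibility $\psi_{f_r(a)}\theta_{i(a)}=\theta_{t(a)}\psi_a$ by the case analysis already performed.

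The main obstacle is arranging compatibility of the local data at cells whose stabilizers arise from (distinct) maximal parabolics; this is exactly what the preceding lemmas are designed to resolve. Finally, the identity $\theta\circ\phi=\mathrm{Id}_{H(\mc{A})}$ is immediate from the construction, since on cells of $A$ the retraction $r$ fixes $\sigma$ and the local maps are identities whenever non-trivial. The three bulleted properties then follow: $\phi$ is an embedding of underlying spaces because $A\hookrightarrow\boldsymbol{C}_{\boxminus}(A,B)$ is (and $B$, hence its cover, is special); the local-group property holds by construction of $\phi$; and $\theta\circ\phi=\mathrm{Id}$ is recorded above.
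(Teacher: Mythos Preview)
Your proposal is correct and follows essentially the same route as the paper: the theorem is explicitly presented there as a summary of the construction carried out throughout Section~\ref{s:CCR}, and you have accurately recapitulated that construction (choice of $\widetilde{Y}$ containing the $c_D\cdot C(P_D)$, the peripherally finite $(\mc{Q}\cup\{H\})$--filling, Agol's theorem to get $\overline{G}_0$, Haglund--Wise to produce $\boldsymbol{C}_{\boxminus}(A,B)$ and hence $G_1$, $H_0$, $\widetilde{C}$, and the definitions of $\phi$ and $\theta$ via Lemma~\ref{lem:same or disj} and the arrow-compatibility check). The only cosmetic wrinkle is the phrasing ``$K_0=\emptyset$ \dots by including the finitely many sets $C(P)$ in $K_0$,'' which is self-contradictory but harmless since Corollary~\ref{L: peripherally full natural} already builds the $C(P)$'s into $\widetilde{Y}$.
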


\end{document}